\title[A representation formula for indefinite improper affine spheres]{
 A representation formula for indefinite improper affine spheres
}
\date{December 7, 2007}
\keywords{improper affine spheres, singularities, 
	  cuspidal cross cap}
\theoremstyle{plain}
 \newtheorem{theorem}{Theorem}[section]
 \newtheorem*{theorem*}{Theorem}
 \newtheorem*{lemma*}{Lemma}
 \newtheorem{proposition}[theorem]{Proposition}
 \newtheorem{fact}[theorem]{Fact}
 \newtheorem*{fact*}{Fact}
\theoremstyle{remark}
 \newtheorem{definition}[theorem]{Definition}
 \newtheorem{remark}[theorem]{Remark}
 \newtheorem*{remark*}{Remark}
 \newtheorem*{acknowledgements}{Acknowledgements}
 \newtheorem{example}[theorem]{Example}
\numberwithin{equation}{section}
\renewcommand{\theenumi}{{\rm(\arabic{enumi})}}
\renewcommand{\labelenumi}{\theenumi}
\newcommand{\R}{\boldsymbol{R}}
\newcommand{\C}{\boldsymbol{C}}
\newcommand{\Ker}{\operatorname{Ker}}
\newcommand{\vol}{\operatorname{vol}}
\renewcommand{\Re}{\operatorname{Re}}
\renewcommand{\phi}{\varphi}
\renewcommand{\epsilon}{\varepsilon}
\author{Daisuke Nakajo}
\address[Daisuke Nakajo]{%
   Graduate School of Mathematics,
   Kyushu University,
   Higashi-ku, Fukuoka 812-8581, Japan%
}
\email{nakajo@math.kyushu-u.ac.jp}
\subjclass[2000]{
Primary 57R45; Secondary 53A10, 53B20.}
\begin{document}
\begin{abstract}
We construct a new representation formula 
for indefinite improper affine spheres 
in terms of two para-holomorphic functions 
and study singularities which appear in this representation formula.
As a result, it follows that cuspidal cross caps never
appear as the singularities on indefinite improper affine spheres
and so on. Comparison with other representation formulae are also
studied.
\end{abstract}
\maketitle
\section*{Introduction}
 Affine spheres are important objects in the study of affine differential geometry. 
They have close relations to the theory of minimal surfaces, 
Monge-Amp\`{e}re equations, Liouville equation, Tzitzeica equation and so on.
In this paper, we construct a representation formula for indefinite improper affine spheres in the affine 3-space 
(Theorem~\ref{thm:rep}), in terms of two para-holomorphic functions.
This may be regarded as an indefinite version of the representation formula
for locally strongly convex improper affine spheres obtained by A. Mart\'{i}nez \cite{Martinez}.
Here, indefinite improper affine spheres mean improper affine spheres with indefinite affine metric.
It is found that affine spheres which are represented by these formulae may have singularities 
and we investigate them. 
As a result, we have peculiar examples of indefinite improper affine spheres with singularities (Section~\ref{sec:ex}).  

 So far, various kinds of representation formulae for (improper) affine spheres are studied by several authors. Perhaps the earliest
version, which is discovered in early twentieth century, is due to Blaschke \cite{Blaschke}. In this representation formula, improper affine spheres are represented in terms of two smooth functions. Another representation formula
is due to Cort\'{e}s \cite{Cortes}. His formula represents special class of improper affine spheres which are related to special  K\"{a}hler structure, in terms of one holomorphic function. The feature of his representation formula is that it even covers higher (even-) dimensional improper affine spheres. 
 
 Recently, A. Mart\'{i}nez made a representation formula for locally strongly convex improper affine spheres 
in terms of two holomorphic functions (\cite{Martinez}). 
The feature of Mart\'{i}nez' representation formula is that it includes even improper affine spheres with singularities. 
In the same paper, he also introduced the notion of IA-map, a class of 
(locally strongly convex) improper affine spheres with singularities
which have close relations with his representation formula.
He also studied a correspondence between improper
affine spheres and flat fronts in hyperbolic 3-space (see \cite{KRSUY}, \cite{flat fronts}) in \cite{IGV}.

 Following Mart\'{i}nez' manner, we introduce \textit{generalized IA-maps} in 
Section~\ref{sec: rep}. (Definitions~\ref{def: Martinez}, ~\ref{def: indefinite}).
 The singularities on indefinite generalized IA-maps have different properties from those on locally strongly convex ones. 
The singularities on locally strongly convex generalized IA-maps are either singularity of fronts, or branch points.
Contrarily, the singularities on indefinite generalized IA-maps, however, are not necessarily fronts nor branch points. 
Moreover, the cuspidal cross caps never appear as singularities of indefinite generalized IA-maps (Theorem~\ref{thm: not 
CCR}), while they appear often in the case of spacelike maximal surfaces in Lorentzian 3-space \cite{FSUY}.
Furthermore, we found a strange example of singularity on indefinite generalized IA-map (Section~\ref{sec:ex}). 
This is not $\mathcal{A}$-equivalent to any singularities on improper affine spheres obtained by 
the projection of a generalized geometric solution  of certain Monge-Amp\`{e}re system 
which are considered in \cite{Ishikawa-Machida} (Section~\ref{sec:comparison}). 

\section{Preliminaries}
\label{sec: prel}
 Before explaining the main part of results, we would like to review some definitions and basic known facts about affine differential geometry, para-complex number and singularity theory.  

\subsection{Improper affine spheres}
 At first, we would like to introduce briefly the affine differential geometry. 
For the detailed exposition, see \cite{Li-Simon-Zhao} and \cite{Nomizu-Sasaki}.

Let $(f,\xi)$ be a pair of an immersion $f: M^n \rightarrow {\R}^{n+1}$ 
into the affine space ${\R}^{n+1}$ and the vector field $\xi$ on $M^n$ along $f$ 
which is transversal to $f_{*}(TM)$.
Then, the Gauss-Weingarten formula become
\begin{equation}
\begin{cases}
\label{eq: Gauss-Weingarten}
  D_{X}{f_*Y} &= f_*({\nabla}_XY)+g(X,Y)\xi, \\
  D_{X}\xi &= -f_*(SX) +\tau(X)\xi, \\
\end{cases}
\end{equation}
where $D$ is the standard connection on ${\R}^{n+1}$. Here, $g$ is called the affine metric of a pair $(f,\xi)$.
It can be easily shown that the rank of the affine metric $g$ is invariant 
under the change of transversal vector field $\xi$. So we call $f$ 
a \textit{locally strongly convex immersion} (respectively \textit{indefinite immersion})  
if $g$ is positive definite (resp. indefinite).
Given an immersion $f: M^n \rightarrow {\R}^{n+1}$, we can choose uniquely 
the transversal vector field $\xi$ which satisfy the following conditions,
\begin{enumerate}
\item ${\tau}\equiv 0$, (or equivalently $D_X\xi \in f_*(TM)$ \ for all $X \in \mathfrak{X}(M)$),
\item ${\vol}_g(X_1, \cdots, X_n) = {\det}(f_*X_1, \cdots, f_*X_n, \xi)$ \ for all  $X_1, \cdots, X_n \in \mathfrak{X}(M)$,
\end{enumerate}
where ${\vol}_{g}$ is the volume form of the (pseudo-)Riemannian metric $g$ and 
$\det$ is the standard volume element of ${\R}^{n+1}$. 
The transversal vector field $\xi$ which satisfies above two conditions 
is called a \textit{Blaschke normal} (or \textit{affine normal}) 
and a pair $(f,\xi)$ of a immersion and its Blaschke normal is called 
a \textit{Blaschke immersion}.

 A Blaschke immersion $(f,\xi)$ with $S=0$ in ~\eqref{eq: Gauss-Weingarten}
is called an \textit{improper affine sphere}.
In this case $\xi$ becomes constant vector because $\tau=0$.
Hence hereafter, we can think of a transversal vector field $\xi$ of 
a improper affine sphere as $\xi=(0,\cdots, 0,1)$
after certain affine transformation of ${\R}^{n+1}$.
 
 The \textit{conormal map} ${\nu}: M^n \rightarrow ({\R}^{n+1})^{*}$ for 
a given Blaschke immersion $(f, \xi)$ is defined as the immersion 
which satisfy the following conditions,
\begin{enumerate}
\item ${\nu}(f_{*}X) =0$ for all $X \in \mathfrak{X}(M)$,
\item ${\nu}(\xi)=1$.
\end{enumerate}
For an improper affine sphere with the Blaschke normal $(0, \cdots, 1)$,
we can write $\nu=(n,1)$ with a smooth map $n: M^2 \rightarrow {\R}^n$.
 
Using the notations defined as above, we can now state the representation formula 
for locally strongly convex improper affine spheres (possibly with singularities)
by Martinez in \cite{Martinez}.
In that paper, he define the notion of improper affine maps, 
a generalization of improper affine spheres which possibly have 
singularities (Definition~\ref{def: Martinez}) 
and give its representation formula as follows:  
\begin{theorem}[Theorem 3 in \cite{Martinez}]
\label{thm:lscIA-map}
 Let $\psi=(x,\phi): {\Sigma}^2\rightarrow {\R}^2\times {\R}$ be an improper affine map. Then
  there exists a regular complex curve ${\alpha}:=(F,G):{\Sigma}^2 \rightarrow {\C}^2$ such that,
\begin{equation}
\label{eq:IA-map}
   \psi=\left( G+\bar{F}, \dfrac{1}{2}(|G|^2-|F|^2)+{\Re}\left( GF-2\int{FdG} \right) \right).
\end{equation}
Here, the conormal map of $\psi$ becomes
\[
  \nu = (\bar{F}-G ,1).
\]
Conversely, given a Riemann surface ${\Sigma}$ and a complex curve $\alpha:=(F,G):{\Sigma}\rightarrow 
{\C}^2$, then ~\eqref{eq:IA-map} gives an improper affine map which is well defined if and only if
$\int{FdG}$ does not have real periods.
\end{theorem}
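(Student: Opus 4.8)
The plan is to build the holomorphic data on the regular locus of $\psi$ and to use the fact that the conormal map remains an immersion across the singular set to extend it. By Definition~\ref{def: Martinez}, off its singular set $\psi = (x,\phi)$ is a locally strongly convex improper affine sphere with constant affine normal $\xi = (0,0,1)$ and conormal $\nu = (n,1)$. The relation $\nu(f_*X) = 0$ gives $d\phi = -\eucinner{n}{dx}$, hence $n = -\operatorname{grad}\phi$ and $dn = -(\operatorname{Hess}\phi)\,dx$; the condition $S=0$ together with the normalization of the Blaschke normal forces $g = \operatorname{Hess}\phi$ with $\det g = 1$ (the improper Monge--Amp\`ere equation). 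I would then use the conformal class of the Riemannian affine metric $g$ to regard $\Sigma$ as a Riemann surface with local complex coordinate $z$. The first fact to record is that the four real functions $x_1,x_2,n_1,n_2$ are all $g$-harmonic: for $n_i$ this is immediate since $g^{jk}\partial_k n_i = -\delta^j_i$ is constant, and for $x_i$ it follows from $\det g = 1$ together with the divergence-free (Piola) property of the cofactor matrix of a Hessian. In an isothermal coordinate this says that $\partial_z x_i$ and $\partial_z n_i$ are holomorphic.

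Next, identifying $\R^2 \cong \C$ and writing $x = x_1 + ix_2$, $n = n_1 + in_2$, I would set $G := \tfrac{1}{2}(x - n)$ and $F := \tfrac{1}{2}(\bar x + \bar n)$, so that $x = G + \bar F$ and $n = \bar F - G$ reproduce the first component of $\psi$ and the claimed conormal $\nu = (\bar F - G, 1)$ by construction. The crux is to prove that $F$ and $G$ are holomorphic in $z$. Harmonicity alone does not suffice; one must feed in the first-order relation $dn = -(\operatorname{Hess}\phi)\,dx$. The cleanest route is to factor the affine metric as $g = \theta\,\bar\theta$ for a $(1,0)$-form $\theta$ (necessarily proportional to $dz$), expand both $dx$ and $dn$ in the frame $\{\theta,\bar\theta\}$, and compare components. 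A direct computation then shows that the $\theta$-components obey $\partial_z n = -\partial_z x$ while the $\bar\theta$-components obey $\partial_{\bar z} n = \partial_{\bar z} x$; these are exactly $\partial_{\bar z} F = 0$ and $\partial_{\bar z} G = 0$. This comparison of frame components is the step I expect to be the main obstacle, since it is where the Monge--Amp\`ere structure and the conformal normalization must be reconciled. Because the conormal $\nu = (\bar F - G, 1)$ is an immersion by the definition of an improper affine map, the curve $\alpha = (F,G)$ is regular and its holomorphic data extends across the singular set; the induced formula $g = (|G'|^2 - |F'|^2)\,|dz|^2$ then locates the singularities of $\psi$ at the points where $|F'| = |G'|$.

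Finally I would recover the height $\phi$ and settle the converse. Substituting $n = \bar F - G$ and $dx = dG + d\bar F$ into $d\phi = -\Re(\bar n\,dx)$ and using $\Re(F\,d\bar F) = \tfrac{1}{2} d|F|^2$, $\Re(\bar G\,dG) = \tfrac{1}{2} d|G|^2$, together with the integration-by-parts identity $\int F\,dG + \int G\,dF = FG$ to eliminate $\int G\,dF$, integrates to exactly $\phi = \tfrac{1}{2}(|G|^2 - |F|^2) + \Re(GF - 2\int F\,dG)$. For the converse, starting from an arbitrary holomorphic curve $\alpha = (F,G)$ on a Riemann surface $\Sigma$, every term of $\psi$ except $-2\Re\int F\,dG$ is manifestly single-valued; hence $\psi$ is globally well defined precisely when $\Re\oint_\gamma F\,dG = 0$ for every loop $\gamma$, i.e. when $\int F\,dG$ has no real periods. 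Reversing the computation of the first two paragraphs shows that the resulting $\psi$ is an improper affine map with the stated conormal, which completes the proof.
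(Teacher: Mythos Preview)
The paper does not prove this theorem: it is quoted verbatim as ``Theorem 3 in \cite{Martinez}'' in the Preliminaries and is used only as background, so there is no proof in the paper to compare against. What the paper \emph{does} prove is the indefinite analogue, Theorem~\ref{thm:rep}, and its argument is rather different from yours. There the author first establishes the duality relations (Proposition~\ref{prop: duality}) in an isothermal coordinate for the affine metric, uses them to show that $\tilde L_\psi=x+jn$ annihilates the two $2$-forms $dy_0\wedge dy_1+dy_2\wedge dy_3$ and $dy_0\wedge dy_2+dy_1\wedge dy_3$ (Proposition~\ref{prop:duality}), and then reads off para-holomorphicity of $F$ directly from the duality relations and of $G$ from the vanishing of $dw_1\wedge dw_2$. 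Translated to the locally strongly convex case this is essentially Mart\'{i}nez' own argument: the special-Lagrangian condition on $\tilde L_\psi$ replaces your harmonicity/Hessian computation and yields holomorphicity of $(F,G)$ in one stroke, bypassing the frame comparison you flag as the main obstacle.

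Your outline is reasonable and would likely go through, but two points deserve tightening. First, your extension step appeals to ``the conormal $\nu=(\bar F-G,1)$ is an immersion by the definition of an improper affine map''; Definition~\ref{def: Martinez} actually requires that $\tilde L_\psi=x+\sqrt{-1}\,n$ be an SL-\emph{immersion}, not that $\nu$ be one. Since $x+\sqrt{-1}\,n=(1-\sqrt{-1})G+(1+\sqrt{-1})\bar F$, immersivity of $\tilde L_\psi$ is exactly regularity of $(F,G)$, which is what you need---so the conclusion is right but the justification should cite $\tilde L_\psi$, not $\nu$. Second, your holomorphicity step (``factor $g=\theta\bar\theta$ and compare components'') is correct in spirit but is precisely the place where the definite-case duality relations $f_u=\nu\times\nu_v$, $\nu_u=-f_v\times\xi$ (the convex counterpart of Proposition~\ref{prop: duality}) do the work cleanly; invoking them would remove the obstacle you anticipate.
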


\subsection{Para-complex numbers}  
 The set of para-complex numbers $\widetilde{\C}$, is an algebra over $\R$ which is defined as
\[
 {\widetilde{\C}}:=\left\{a+jb| a,b \in {\R}, j^2=1 \right\}.
\]
For $z = u+jv \in {\widetilde{\C}}$ the conjugate $\bar{z}$ is defined as $\bar{z}:= u-jv$ 
and absolute value $|z|$ is defined as $|z|:=z{\bar{z}}=u^2-v^2$.
As an analogy to the holomorphic function, we can define the so-called para-holomorphic function: 
A map $F: {\widetilde{\C}}\rightarrow {\widetilde{\C}}$
is called a \textit{para-holomorphic function} if $F$ satisfies the para-Cauchy-Riemann equation, that is,
\begin{equation}
\label{eq: para-CR}
\begin{cases}
   &\dfrac{\partial{f^1}}{\partial{u}} =  \dfrac{\partial{f^2}}{\partial{v}}, \\[6pt]
   &\dfrac{\partial{f^1}}{\partial{v}} =  \dfrac{\partial{f^2}}{\partial{u}}. \\
\end{cases}
\end{equation}
where $F(u+jv)=f^1(u,v)+jf^2(u,v)$.
The set of para-holomorphic functions forms an algebra. For a para-holomorphic function
$F: {\widetilde{\C}}\rightarrow {\widetilde{\C}}$,  we define its differential $F'$ as
\[
F'(u+iv):=\dfrac{\partial{f^1}}{\partial{u}}+j\dfrac{\partial{f^2}}{\partial{u}} \ \ \ \ 
( \textrm{or equivalently}, F'(u+iv):=\dfrac{\partial{f^2}}{\partial{v}}+j\dfrac{\partial{f^1}}{\partial{v}})
\]
where $F(u+jv)=f^1(u,v)+jf^2(u,v)$.

Since ~\eqref{eq: para-CR} is reduced to the wave equation
\[
 \frac{{\partial}^2{f^j}}{\partial{u^2}}
 -\frac{{\partial}^2{f^j}}{\partial{v^2}}=0 \ \ \ \ \ (j=1,2)
\]
and the general solution of the wave equation, which is known 
as d'Alembert solution, is 
given by arbitrary two smooth functions,
a para-holomorphic function is  
described in terms of two smooth functions.
Concretely, for a para-holomorphic function $F$, there exist two smooth functions $\rho$ and $\sigma$ on ${\R}^2$,
such that 
\[ 
  F(u+jv) = \rho (u+v)+\sigma(u-v) + j \left\{ \rho(u+v)-\sigma(u-v) \right\}
\] 
holds. For more detailed exposition on para-complex numbers, see \cite{Inoguchi-Toda}.

\subsection{Criteria for singularities}
 Here, we explain criteria for singularities of smooth maps (see \cite{FSUY}, \cite{KRSUY} and \cite{flat fronts}).
The explanation is restricted to the case for smooth maps from $2$-dimensional manifolds $M^2$ to the affine
$3$-space ${\R}^3$ because all the affine spheres are considered to be $2$-dimensional
in this paper.

 Consider a smooth map $f:U\rightarrow {\R}^3$ from a open subset $U$ of ${\R}^2$
to the affine $3$-space ${\R}^3$. 
A point $p$ on $U$ is called a \textit{singular point of $f$}, if $f$ is not immersive
at $p \in U$. On studying the local properties of singularity on smooth maps, 
one usually consider its map-germ. Let $f:U\rightarrow {\R}^3$ and $g: V \rightarrow {\R}^3$
are smooth maps from open sets $U$, $V$ of ${\R}^2$ to ${\R}^3$ respectively, with $p \in U\cap V$.
Then we call \textit{$f$ and $g$ defines the same map-germ at $p$} if there exists an open set
$W \subset {\R}^2$  with $p \in W$ and $W \subset {U\cap V}$ such that $f=g$ holds on $W$.  
Next, we introduce $\mathcal{A}$-equivalence, an equivalence relation between two singularities.
Given two smooth maps $f:U\rightarrow {\R}^3$ and $g:\widetilde{U}\rightarrow {\R}^3$, and
assume $p \in U$ and $\tilde{p} \in {\widetilde{U}}$ are singular points. Consider two map-germ
$(f,p)$ around $p \in U$ and $(g, \widetilde{p})$ around $\tilde{p} \in \widetilde{U}$.
Then $(f,p)$ and $(g, \tilde{p})$ is called \textit{$\mathcal{A}$-equivalent} 
if there exists two diffeomorphism-germ $\phi: U\rightarrow {\widetilde{U}}$ and 
$\psi:{\R}^3\rightarrow {\R}^3$ such that $\psi \circ f = \widetilde{f} \circ \phi$
as a map-germ around $p \in U$ and $\tilde{p}= \phi(p)$ hold.

 In this paper, we mainly consider the special class of singularities, that is, frontals and fronts. 
A smooth map (possibly with singularities) $f:U\rightarrow {\R}^3$ from open subset $U \subset {\R}^2$
to the affine $3$-space ${\R}^3$ is called a \textit{frontal} (or a \textit{frontal map}) 
if there exists a unit normal vector field $\nu$ to $f$ (even on singular points). 
This is equivalent to the existence of 
a map to the 2-sphere $\nu:U \rightarrow S^2\subset{\R}^3$ which satisfies
\begin{equation}
\label{eq:frontal map}
\langle df, \nu \rangle = 0,
\end{equation}
where $\langle , \rangle$ is the Euclidean metric of ${\R}^3$.
By the identification of the unit tangent bundle $T_{1}{\R}^3$ and the unit cotangent bundle $T_{1}^{*}{\R}^3$,
frontals can be considered as the projection of a Legendrian map into $T_{1}^{*}{\R}^3$ with respect to 
the canonical contact structure as follows. Let $f:U\rightarrow {\R}^3$ be a frontal and $\nu$ be its unit normal
vector field. Then $f$ and $\nu$ defines a map into the unit tangent bundle, $(f,\nu):U \rightarrow T_1{\R}^3$.
We can consider this map as $(f,\nu):U \rightarrow T_1^{*}{\R}^3$ by the above identification.
Then the condition ~\eqref{eq:frontal map} is equivalent to that $(f,\nu)$ is Legendrian map into $T_{1}^{*}{\R}^3$.
Conversely, from a Legendrian map into $T_{1}^{*}{\R}^3$, we can make a frontal into ${\R}^3$ by the projections 
into ${\R}^3$ and $S^2$ respectively. A frontal $f:U\rightarrow {\R}^3$ is called a \textit{front} if $(f,\nu)$
defined as above is an immersion. From above arguments, it can be seen that 
the notion of fronts and frontals do not depend on the choice of 
the Riemannian metric $g$ of ${\R}^3$.

Let $\det$ be the standard volume element of ${\R}^3$. 
Then the function $\lambda(u,v):=\det(f_u, f_v, \nu)$ is called the \textit{signed area density function}
where $\tilde{nu}$ is the unit normal of $f$. It is obvious from the definition that $p \in U$ is a singular
point if and only if $\lambda(p)=0$ holds.
A singular point $p \in M^2$ is called \textit{non-degenerate}
if the derivative $d{\lambda}$ of the signed area density function does not vanish at $p$.

 The typical examples of fronts are \textit{cuspidal edges} and \textit{swallowtails}, while those of frontals are 
a \textit{cuspidal cross caps}. Here, cuspidal edges, swallowtails and cuspidal cross caps are defined as a smooth
map $f:M^2\rightarrow N^3$ which are $\mathcal{A}$-equivalent to the following maps, $f_C$, $f_S$ and $f_{CCR}$ 
respectively:
\[
  f_C(u,v):=(u^2,u^3,v), f_S(u,v):=(3u^4+u^2v,4u^3+2uv,v),
\]
\[   f_{CCR}(u,v):=(u,v^2,uv^3).
\]

 In \cite{FSUY} and \cite{KRSUY}, a geometric criteria for singular points to be cuspidal edges, swallowtails
and cuspidal cross caps are given. To make the criterion, they define \textit{singular curve},
\textit{null direction} and so on. If $p \in U$ is a non-degenerate singular point, then the implicit 
function theorem implies that the singular set $\Sigma_f$ around $p$ becomes locally a regular curve 
because $\Sigma_f$ coincides with the zero sets of signed area density function $\lambda$. This curve is called
a singular curve and denoted by $\gamma: (-\epsilon, \epsilon) \rightarrow U $. Usually, we fix the 
parametrization of $\gamma$ as to $\gamma(0)=p$. The \textit{singular direction} of $f$ at $\gamma(t)$ is the 
1-dimensional subspace of $T_pU$ which is spanned by $\gamma'(0) \in T_pU$. The null direction is 
defined as the kernel of the differential map $f_*$. This is defined uniquely only in the case that image of the
differential map $f_*$ become 1-dimensional. A vector field $\eta(t)$ along the singular curve
$\gamma(t)$ is called a \textit{null vector field} if it associates the non-zero vector belonging to 
the null direction in $T_{\gamma(t)}U$ to each $t$.

 Using the above definitions, criteria for cuspidal edges, swallow tails and cuspidal cross caps are given as 
follows.  
\begin{fact}[Criteria for cuspidal edges and swallowtails \cite{KRSUY}]
\label{fact: CS}
Let $p$ be a non-degenerate singular point of a front $f$, $\gamma$ the singular curve passing through $p$, 
and $\eta$ a null vector field along $\gamma$. Then
\begin{enumerate}
 \item $p=\gamma(0)$ is a cuspidal edge if and only if the null direction and the singular direction are 
       transversal, that is, $\det(\gamma'(0), \eta(0)) \not= 0$ holds, where $\det$ denotes the determinant
       of $2\times 2$ matrices and where we identify the tangent space in $T_pU$ with ${\R}^2$.
 \item $p=\gamma(0)$ is a swallowtail if and only if
       \[ 
          \det(\gamma'(0), \eta(0)) = 0 \ \ \ \ and\ \ \ \  \dfrac{d}{dt}{\Big|}_{t=0}\det(\gamma'(t), \eta(t)) \not= 0
       \]hold. 
\end{enumerate} 
\end{fact}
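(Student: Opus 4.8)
The plan is to bring the map-germ $f$ into the normal forms $f_C$ and $f_S$ by explicit changes of coordinates on the source and the target, exploiting throughout the hypothesis that $f$ is a \emph{front}, i.e.\ that the Legendrian lift $(f,\nu)\colon U\to T_1^{*}\R^3$ is an immersion. Since $p$ is non-degenerate, $d\lambda_p\neq 0$, so by the implicit function theorem the singular set is the regular curve $\gamma$, and along $\gamma$ the differential $f_{*}$ has rank exactly one: the front condition forbids rank zero because $(f,\nu)$ is immersive. Hence the null direction $\eta$ is well defined. Because $\gamma'$ is tangent to the level set $\{\lambda=0\}$, we have $d\lambda(\gamma')\equiv 0$, so the kernel of the nonzero covector $d\lambda_p$ is spanned by $\gamma'(0)$; consequently $d\lambda(\eta)\neq 0$ precisely when $\det(\gamma'(0),\eta(0))\neq 0$. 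Thus the cuspidal-edge condition in (1) is equivalent to the directional derivative $\eta\lambda$ being non-zero at $p$, which is the form I would actually use in the computation.

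For part (1), I would choose coordinates $(u,v)$ centered at $p$ so that the singular curve is the $v$-axis $\{u=0\}$ and the null vector field is $\partial_u$ along it; transversality of the null and singular directions makes this legitimate. Then $f_u(0,v)=0$, so by Hadamard's lemma $f_u(u,v)=u\,a(u,v)$ for a smooth map $a$, and the $u$-dependence of $f$ begins at second order:
\[
  f(u,v)=f(0,v)+\tfrac{1}{2}u^2 f_{uu}(0,v)+O(u^3).
\]
The equivalence $\eta\lambda\neq 0$ forces $f_{uu}(0,v)$ to be transversal to $f_v(0,v)$, while immersivity of $(f,\nu)$, which in these coordinates reads $\nu_u(0,v)\neq 0$ since $f_u(0,v)=0$, guarantees that the third-order term genuinely produces a cusp rather than a fold. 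Taking $\bigl(f_{uu}(0,0),f_v(0,0),\nu(0,0)\bigr)$ as a target frame and absorbing the higher-order terms by a source diffeomorphism $\phi$ and a target diffeomorphism $\psi$, I expect to arrive at $\psi\circ f\circ\phi=(u^2,u^3,v)=f_C$, which is the cuspidal edge.

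For part (2), the hypothesis $\det(\gamma'(0),\eta(0))=0$ means the null direction is tangent to the singular curve at $p$, so the previous normalization degenerates; the extra condition $\tfrac{d}{dt}\big|_{t=0}\det(\gamma'(t),\eta(t))\neq 0$ is exactly the first-order non-degeneracy that upgrades the singularity from the $A_2$ type to the $A_3$ type. I would set $\delta(t):=\det(\gamma'(t),\eta(t))$, so that $\delta(0)=0$ and $\delta'(0)\neq 0$, and use this to adapt the coordinates so that the tangency of $\eta$ and $\gamma$ is quadratic. Carrying the Taylor expansion one order further and again invoking immersivity of $(f,\nu)$ to control the normal part, I expect to reduce $f$ to $f_S=(3u^4+u^2v,4u^3+2uv,v)$ by a further pair of source and target diffeomorphisms, giving the swallowtail.

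The hard part will be the final reduction in each case: showing that, after the leading normalization, the remaining higher-order terms can be removed by a diffeomorphism rather than contributing a genuinely inequivalent germ. This is where the front hypothesis is indispensable, for it is the immersivity of $(f,\nu)$, equivalently $\nu_u(0,v)\neq 0$ in the cuspidal-edge coordinates, that rigidifies the normal data and lets one appeal to a finite-determinacy normal-form argument, as in the Legendrian-singularity classification of Arnold, to absorb the tail. Without it the same $\lambda$- and $\eta$-conditions would be compatible with non-front singularities, so the crux is to carry the Legendrian lift along with $f$ through all the coordinate changes.
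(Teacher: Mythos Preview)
This statement is presented in the paper as a \emph{Fact} with a citation to \cite{KRSUY}; the paper does not supply its own proof, so there is nothing here to compare your argument against. The criteria are quoted verbatim from Kokubu--Rossman--Saji--Umehara--Yamada and used as a black box in Example~\ref{ex: 23} and Example~\ref{ex: 34}.

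As for your sketch on its own terms: the broad strategy---normalize coordinates so that the singular curve and null direction take a standard position, Taylor-expand, and then invoke the front hypothesis to kill higher-order terms---is indeed the shape of the argument in \cite{KRSUY}. But what you have written is an outline, not a proof. The step you flag as ``the hard part,'' namely absorbing the tail by source and target diffeomorphisms, is exactly where all the content lies, and you have not carried it out; appealing vaguely to ``a finite-determinacy normal-form argument, as in the Legendrian-singularity classification of Arnold'' is not a substitute for the explicit reduction. In particular, for the swallowtail case the bookkeeping is genuinely delicate: one has to track how the condition $\delta'(0)\neq 0$ interacts with the third- and fourth-order terms of $f$ and with the immersivity of $(f,\nu)$, and this is several pages of work in the original reference. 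If you want a self-contained proof you should consult \cite{KRSUY} directly; for the purposes of the present paper no proof is expected.
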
 
 
\begin{fact}[Criterion for cuspidal cross caps \cite{FSUY}]
\label{fact: CCR}
 Let $f:U\rightarrow {\R}^3$ be a frontal and $\gamma(t)$ a singular curve on $U$ 
passing through a non-degenerate singular point $p=\gamma(0)$. We set
\[
  \Psi(t):=\det(\tilde{\gamma}', D_{\eta}^{f}\nu, \nu),
\]
where $\tilde{\gamma}=f\circ{\gamma}$, $D_{\eta}^{f}\nu$ is the canonical covariant 
derivative along a map $f$ induced from the standard connection on ${\R}^3$, and 
$'=d/dt$. Then the germ of $f$ at $p=\gamma(0)$ is $\mathcal{A}$-equivalent to
a cuspidal cross cap if and only if 
\begin{enumerate}
 \item $\eta(0)$ is transversal to $\gamma'(0)$,
 \item $\Psi(0)=$ and $\Psi'(0)\not=0$.
\end{enumerate}
\end{fact}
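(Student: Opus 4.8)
The plan is to establish both implications by reducing $f$ to the model $f_{CCR}$ in coordinates adapted to the singular curve and the null direction. Since $p$ is non-degenerate, the singular set is a regular curve, and condition (1) (transversality of $\eta(0)$ and $\gamma'(0)$) lets me choose coordinates $(u,v)$ on $U$ with singular curve $\gamma(t)=(t,0)$ and null vector field $\eta=\partial_v$. Because $\eta$ spans $\Ker df$ along $\gamma$, this gives $f_v(u,0)\equiv0$, so
\[
  f(u,v)=c(u)+a(u)v^2+b(u)v^3+O(v^4),\qquad c(u):=f(u,0)=\tilde\gamma(u),
\]
with $a=\tfrac12 f_{vv}(\cdot,0)$ and $b=\tfrac16 f_{vvv}(\cdot,0)$. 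Differentiating the frontal conditions $\langle\nu,f_u\rangle=\langle\nu,f_v\rangle=0$ in $v$ and restricting to $v=0$ yields $\nu\perp c'$, $\nu\perp a$, and---using $f_{uv}(u,0)=\partial_u f_v(u,0)=0$---also $\nu_v(u,0)\perp c'$ together with $\nu_v\perp\nu$. Finally, computing the signed area density $\lambda=\det(f_u,f_v,\nu)$ along $\{v=0\}$ gives $\lambda_v(u,0)=2\det(c',a,\nu)$, so non-degeneracy of $p$ forces $\{c',a,\nu\}$ to be a frame along $\gamma$ near $p$.

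With this frame in hand I would rewrite $\Psi$. Along $\gamma$ one has $\tilde\gamma'=c'$ and $D_\eta^f\nu=\nu_v(\cdot,0)$, so $\Psi(t)=\det\bigl(c'(t),\nu_v(t,0),\nu(t,0)\bigr)$. Since $\nu_v(u,0)$ is orthogonal to both $\nu$ and $c'$, it lies on the one-dimensional space $\operatorname{span}(c',a)\cap(c')^{\perp}$, and therefore $\Psi(t)=0$ if and only if $\nu_v(t,0)=0$. Geometrically this means that $\Psi(0)=0$ is exactly the statement that the Legendrian lift $(f,\nu)$ fails to be an immersion at $p$ (so $f$ is a frontal but not a front there), while $\Psi'(0)\neq0$ says this degeneracy is of the lowest possible order; this is precisely what separates a cuspidal cross cap from a cuspidal edge, for which the lift stays immersive. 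A direct computation on the model $f_{CCR}(u,v)=(u,v^2,uv^3)$, whose normal is proportional to $(-2v^3,-3uv,2)$, gives $\Psi$ vanishing to exactly first order at $t=0$; since conditions (1) and (2) are invariant under the diffeomorphisms defining $\mathcal{A}$-equivalence, this settles the ``only if'' direction.

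For the ``if'' direction I would normalise $f$ to $f_{CCR}$ by coordinate changes in the source together with an ambient diffeomorphism of $\R^3$. Using $\{c'(0),a(0),\nu(0)\}$ as a linear target frame, one first arranges $c(u)=(u,0,0)+\cdots$ and $a(u)=(0,1,0)+\cdots$, then reparametrises $v$ and subtracts tangential terms to remove the inessential parts of $b$ and of the higher-order $v$-jet; the condition $\Psi(0)=0,\ \Psi'(0)\neq0$ pins down the surviving $uv^3$ coefficient and excludes the front normal forms. The main obstacle is precisely this reduction: one must show that the stated finite-order conditions already determine the $\mathcal{A}$-class, i.e.\ that the germ is finitely $\mathcal{A}$-determined and that all higher-order terms can be absorbed uniformly along the singular curve. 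I would treat this either by an explicit order-by-order normalisation driven by the non-degeneracy and the $\Psi$-conditions, or, more systematically, by invoking the Malgrange preparation theorem together with Mather's finite-determinacy criterion to upgrade agreement of jets to genuine $\mathcal{A}$-equivalence.
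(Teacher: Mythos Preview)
The paper does not prove this statement at all: it is recorded as a \emph{Fact} and attributed to \cite{FSUY} (Fujimori--Saji--Umehara--Yamada), with no proof given in the present paper. So there is no ``paper's own proof'' to compare your attempt against; the criterion is simply quoted from the literature and then applied (in Theorem~\ref{thm: not CCR}).

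As for your sketch itself, the overall strategy---adapted coordinates making the singular curve $\{v=0\}$ with $\eta=\partial_v$, Taylor expansion of $f$ in $v$, interpretation of $\Psi(0)=0$ as failure of the Legendrian lift to immerse, and then a normal-form reduction backed by finite determinacy---is indeed the shape of the argument in \cite{FSUY}. You correctly identify the hard part: showing that conditions (1) and (2) suffice to determine the $\mathcal{A}$-class, i.e.\ absorbing all higher-order terms. Your sketch defers this to ``Malgrange preparation together with Mather's finite-determinacy criterion'' without saying which jet order is needed or why the tangent space to the $\mathcal{A}$-orbit is large enough; that is exactly the substantive work, and as written the sketch does not carry it out. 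One small correction: your claim that $\nu_v(u,0)\perp c'$ needs care, since differentiating $\langle\nu,f_u\rangle=0$ in $v$ gives $\langle\nu_v,f_u\rangle+\langle\nu,f_{uv}\rangle=0$, and while $f_{uv}(u,0)=0$ does follow from $f_v(u,0)\equiv0$, you should make the dependence of this step on the particular coordinate choice explicit.
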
 
  
\section{Representation formula for indefinite improper affine spheres}
\label{sec: rep}
 In this section, we introduce a representation formula for indefinite improper affine spheres by modifying Mart\'{i}nez'
 method in ~\cite{Martinez}. To do this, we should review a duality relation for improper affine spheres 
with indefinite affine metric. Throughout this paper, we fix its affine normal to $\xi=(0,0,1)$ by the appropriate 
affine transformation of ${\R}^3$. 

\begin{proposition}[Duality relations]
 \label{prop: duality}
\ Let $f:{\Sigma}^2\rightarrow {\R}^3$ be an indefinite improper affine sphere, $\xi$ its affine normal and ${\nu}:{\Sigma}^2\rightarrow {\R}^3$ the conormal 
map of $f$, where $\Sigma^2$ is an open subset of ${\R}^2$. Take a coordinate system $(u,v)$ of ${\Sigma}^2$ so that the affine metric $g$ of $f$ (which is indefinite in this 
case) is represented as $g=E(du^2-dv^2)$. Then, the following identities hold;
\[\hspace{0.1cm}
\left\{
\begin{array}{cll}
f_u&= {\nu}\times {\nu}_v \ \ \ {\nu}_u= f_v\times {\xi}\\
f_v&= {\nu}\times {\nu}_u \ \ \ {\nu}_v= f_u\times {\xi}\\
\end{array}
\right.
\]
where $\times$ is the outer product on ${\R}^3$, 
$f_u:={\dfrac{\partial{f}}{\partial{u}}}$ and $f_v:={\dfrac{\partial{f}}{\partial{v}}}$ 
\end{proposition}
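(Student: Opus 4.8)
The plan is to work directly from the three defining relations of the conormal map together with the Gauss--Weingarten equations, identifying each relevant vector by its inner products against the frame $\{f_u,f_v,\xi\}$. Since $f$ is an improper affine sphere we have $S=0$ and $\tau=0$ in~\eqref{eq: Gauss-Weingarten}, so the second equation gives $D_X\xi=0$: the affine normal $\xi$ is a constant vector (consistently with $\xi=(0,0,1)$), while the first equation reduces to $f_{XY}=f_*(\nabla_XY)+g(X,Y)\xi$. In the chosen coordinates $g_{uu}=E$, $g_{vv}=-E$, $g_{uv}=0$, and the conormal conditions read $\inner{\nu}{f_u}=\inner{\nu}{f_v}=0$ and $\inner{\nu}{\xi}=1$. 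Because $f$ is an immersion and $\xi$ is transversal, $\{f_u,f_v,\xi\}$ is a basis of $\R^3$ at each point, so a vector is pinned down by its three inner products against this frame.

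First I would compute the six inner products of $\nu_u$ and $\nu_v$ with the frame. Differentiating $\inner{\nu}{\xi}=1$ and using $\xi_u=\xi_v=0$ gives $\inner{\nu_u}{\xi}=\inner{\nu_v}{\xi}=0$. Differentiating $\inner{\nu}{f_u}=0$ in $u$ and substituting $f_{uu}=f_*(\nabla_{\partial_u}\partial_u)+E\xi$ yields $\inner{\nu_u}{f_u}=-\inner{\nu}{f_{uu}}=-E$, since $\nu$ annihilates the tangential part. The mixed derivative $f_{uv}=f_*(\nabla_{\partial_u}\partial_v)$ is purely tangential (as $g_{uv}=0$), so differentiating the two orthogonality relations across variables gives $\inner{\nu_u}{f_v}=\inner{\nu_v}{f_u}=0$; finally $f_{vv}=f_*(\nabla_{\partial_v}\partial_v)-E\xi$ gives $\inner{\nu_v}{f_v}=-\inner{\nu}{f_{vv}}=E$.

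With these in hand, the identities for $\nu_u$ and $\nu_v$ follow by a parallelism argument. Since $\nu_u$ is orthogonal to both $\xi$ and $f_v$, and these span a plane, $\nu_u$ must be a scalar multiple of $f_v\times\xi$; the scalar is fixed by pairing with $f_u$ and invoking the Blaschke area normalization. Indeed condition~(2) defining the Blaschke normal determines $\det(f_u,f_v,\xi)$ from the volume form of $g$, and with the orientation convention making $\det(f_u,f_v,\xi)=-E$ one computes $\inner{f_v\times\xi}{f_u}=\det(f_v,\xi,f_u)=\det(f_u,f_v,\xi)=-E=\inner{\nu_u}{f_u}$, whence $\nu_u=f_v\times\xi$. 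Running the same argument with $f_u\times\xi$ gives $\nu_v=f_u\times\xi$.

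The remaining two identities then follow with no further computation, which I regard as the cleanest part. Substituting the formulas just obtained and applying the vector triple product identity $a\times(b\times c)=b\,\inner{a}{c}-c\,\inner{a}{b}$ gives
\[
 \nu\times\nu_v=\nu\times(f_u\times\xi)=f_u\inner{\nu}{\xi}-\xi\inner{\nu}{f_u}=f_u,
\]
and likewise $\nu\times\nu_u=\nu\times(f_v\times\xi)=f_v\inner{\nu}{\xi}-\xi\inner{\nu}{f_v}=f_v$, using $\inner{\nu}{\xi}=1$ and $\inner{\nu}{f_u}=\inner{\nu}{f_v}=0$. The one genuinely delicate point throughout is the sign bookkeeping forced by the indefinite metric: the orientation convention for the volume form of the Lorentzian $g$ (equivalently, the sign of $\det(f_u,f_v,\xi)$ relative to $E$) must be fixed consistently at the outset, since it is exactly what reconciles the minus sign in $\inner{\nu_u}{f_u}=-E$ with the stated identity $\nu_u=f_v\times\xi$.
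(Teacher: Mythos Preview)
Your proof is correct and follows precisely the route the paper sketches: deriving the identities from the defining properties of the conormal map together with the Gauss--Weingarten equations (with $S=\tau=0$). The paper gives only a one-line outline referring to the parallel definite case in \cite{Matsuura-Urakawa}, so your argument is a detailed elaboration of the same approach; your explicit caveat about the Lorentzian volume-form sign convention is exactly the ``taking care'' the paper alludes to.
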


\begin{proof}
 The duality relations for affine spheres are well-known for the other kinds of affine spheres. The proof is almost
parallel to them. 
In fact, we can obtain the duality relations for indefinite improper affine spheres by taking care of the definition of
conormal maps and the 
Gauss-Codazzi equations. The case of definite improper affine spheres is presented in \cite{Matsuura-Urakawa}.
\end{proof}

 Let $\psi=(x,\phi):{\Sigma}^2\rightarrow {\R}^3(={\R}^2\times{\R})$ be 
an indefinite improper affine sphere with the Blaschke normal ${\xi}=(0,0,1)$, 
where $x:{\Sigma}^2\rightarrow {\R}^2$ and $\phi:{\Sigma}^2\rightarrow {\R}$ are smooth maps. 
 Then the conormal vector field ${\nu}:{\Sigma}^2\rightarrow {\R}^3(={\R}^2\times{\R})$ can be represented 
as ${\nu}=(n,1)$ and the affine metric is written as $g=-\langle dx, dn \rangle $, where $\langle\ ,\ \rangle$ 
is the standard Euclidean inner product of ${\R}^2$. 

 Using Proposition~\ref{prop: duality}, we obtain the following characterization of indefinite improper affine spheres. 
 
\begin{proposition}
 \label{prop:duality}
 Define ${\tilde{L}}_{\psi}:{\Sigma}^2 \rightarrow {\widetilde{\C}}^2$ as 
\[{\tilde{L}}_f:=x+jn, 
\]
where $x$ and $n$ are defined as above, that is, $x:=\pi \circ {\psi}$ and $n:=\pi \circ \nu$ 
with the projection $\pi:{\R}^3 \rightarrow {\R}^2$.
Identify ${\widetilde{\C}}^2$ with ${\R}^4$ as,
\[{\widetilde{\C}}^2 \ni z_1=y_0+jy_1, z_2=y_2+jy_3 \longleftrightarrow   (y_0,y_1,y_2,y_3) \in {\R}^4
\]
Then $\tilde{L}_{\psi}$ annihilates $2$-forms $dy_0{\wedge}dy_1+dy_2{\wedge}dy_3$ and $dy_0{\wedge}dy_2+dy_1{\wedge}dy_3$, in other words, 
\begin{equation}
 \label{eq:2-forms}
 \begin{cases}
    dy_0{\wedge}dy_1+dy_2{\wedge}dy_3|_{{\tilde{L}}_{\psi}({\Sigma}^2)} &= 0, \\  
    dy_0{\wedge}dy_2+dy_1{\wedge}dy_3|_{{\tilde{L}}_{\psi}({\Sigma}^2)} &= 0. \\
  \end{cases} 
\end{equation}
hold.
\end{proposition}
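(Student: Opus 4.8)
The plan is to reduce the two form identities \eqref{eq:2-forms} to a handful of scalar partial differential relations among the components of $x$ and $n$, and then to produce those relations directly from Proposition~\ref{prop: duality}. Write $x=(x_1,x_2)$ and $n=(n_1,n_2)$. Under the stated identification ${\widetilde{\C}}^2\cong{\R}^4$, the map ${\tilde L}_\psi=x+jn=(x_1+jn_1,\,x_2+jn_2)$ pulls back the real coordinates to $y_0=x_1$, $y_1=n_1$, $y_2=x_2$, $y_3=n_2$. Hence
\[
  dy_0{\wedge}dy_1+dy_2{\wedge}dy_3 = dx_1{\wedge}dn_1+dx_2{\wedge}dn_2,
\]
\[
  dy_0{\wedge}dy_2+dy_1{\wedge}dy_3 = dx_1{\wedge}dx_2+dn_1{\wedge}dn_2,
\]
so it suffices to check that both right-hand sides vanish as $2$-forms on ${\Sigma}^2$.

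First I would make the duality relations explicit in the coordinate $(u,v)$. Writing $f=\psi=(x_1,x_2,\phi)$, $\nu=(n_1,n_2,1)$ and $\xi=(0,0,1)$, the two relations ${\nu}_u=f_v\times{\xi}$ and ${\nu}_v=f_u\times{\xi}$ of Proposition~\ref{prop: duality} give, upon computing the outer products and comparing components,
\[
  (n_1)_u=(x_2)_v,\quad (n_2)_u=-(x_1)_v,\quad (n_1)_v=(x_2)_u,\quad (n_2)_v=-(x_1)_u.
\]
The third component of each relation is automatic, since the last entry of $\nu$ is the constant $1$ and the last entry of $f_v\times\xi$, $f_u\times\xi$ is $0$.

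Finally, since every $2$-form on the $2$-manifold ${\Sigma}^2$ is a multiple of $du{\wedge}dv$, I would just expand each wedge product and substitute the four relations. For the first form the coefficient of $du{\wedge}dv$ becomes $\big[(x_1)_u(x_2)_u-(x_1)_v(x_2)_v\big]-\big[(x_1)_u(x_2)_u-(x_1)_v(x_2)_v\big]=0$, and for the second form it becomes $\big[(x_1)_u(x_2)_v-(x_1)_v(x_2)_u\big]-\big[(x_1)_u(x_2)_v-(x_1)_v(x_2)_u\big]=0$; in each case the two groups cancel identically, which is exactly \eqref{eq:2-forms}.

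I do not expect a genuine obstacle here: once the vector duality relations are turned into the four scalar identities, the verification is a one-line cancellation. The only points demanding care are bookkeeping ones—pinning down precisely which real coordinate $y_i$ corresponds to which of $x_1,n_1,x_2,n_2$ under the identification, and correctly tracking the signs produced by the outer products $f_v\times\xi$ and $f_u\times\xi$, since a sign slip there would destroy the cancellation and hence the whole identity.
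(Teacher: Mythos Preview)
Your proof is correct and follows essentially the same route as the paper: pull back the two $2$-forms via the identification, expand in the $(u,v)$ coordinates, and cancel using the scalar relations coming from Proposition~\ref{prop: duality}. The paper leaves the extraction of the four scalar identities $(n_1)_u=(x_2)_v$, etc., implicit, whereas you spell them out; also, the paper's proof swaps which pullback corresponds to $dx^1\wedge dx^2+dn^1\wedge dn^2$ and which to $dx^1\wedge dn^1+dx^2\wedge dn^2$ relative to your (correct) reading of the identification, but since both forms must vanish this is immaterial.
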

 
\begin{proof}
 Applying Proposition~\ref{prop:duality}, we have
\[\begin{split}
   {{\tilde{L}}_{\psi}}^{*}({dy_0{\wedge}dy_1+dy_2{\wedge}dy_3}) &=dx^1{\wedge}dx^2+dn^1{\wedge}dn^2 \\
                   &=\left\{(x^1_ux^2_v-x^1_vx^2_u)+(n^1_un^2_v-n^1_vn^2_u)\right\}du{\wedge}dv \\  
                                                 &= 0 
  \end{split} 
\]
and 
\[\begin{split}
   {{\tilde{L}}_{\psi}}^{*}({dy_0{\wedge}dy_2+dy_1{\wedge}dy_3}) &=dx^1{\wedge}dn^1+dx^2{\wedge}dn^2 \\
                   &=\left\{(x^1_un^1_v-x^1_vn^1_u)+(x^2_un^2_v-x^2_vn^2_u)\right\}du{\wedge}dv \\  
                                                 &= 0 
  \end{split} 
\]
where $x=(x^1,x^2)$ and $n=(n^1,n^2)$.
\end{proof}

 Proposition~\ref{prop:duality} immediately leads us to the following representation formula.

\begin{theorem}
 \label{thm:rep}
 \begin{enumerate}
  \item Let ${\psi}=(x,\phi):{\Sigma}^2 \rightarrow {\R}^2 \times {\R}$ be an
	indefinite improper affine sphere whose affine normal is fixed
	to ${\xi}=(0,0,1)$. 
	Let ${\nu}=(n,1):{\Sigma}^2 \rightarrow {\R}^2 \times {\R}$ be
	the conormal map of $\psi$. Then there exists a para-holomorphic curve
	${\alpha}:=(F,G):{\Sigma}^2 \rightarrow {\widetilde{\C}}^2$ such
	that $|dF|\not=|dG|$ and
	\begin{equation}
	 \label{eq:F and G}
  \begin{cases}
   x&= {F}-{\bar{G}}, \\
   n&= {\bar{F}}+{G}. \\
  \end{cases} 
	\end{equation}
	hold everywhere.
 \item Conversely, given a para-holomorphic curve $(F,G):
	{\Sigma}\rightarrow {\widetilde{\C}}^2$, then 
	\[ \psi=\left(x, -\int{{\langle\ n,dx  \rangle}}\right)
	\]
	gives an indefinite improper affine sphere (possibly with
	singularities), where $x$ and $n$ are defined by ~\eqref{eq:F and G}. 
	Moreover, if we write $F$ and $G$ as $F=f^1+jf^2, G=g^1+jg^2$,
	then $\psi$ can be expressed as
	\begin{multline}
    \label{eq:explicit}
	 \psi=\left(
	 f^1-g^1,
	 f^2+g^2,
         \vphantom{\int}\right.\\
	 -\int{\left\{(f^1+g^1)(f^1_u-g^1_u)+(-f^2+g^2)(f^2_u+g^2_u)\right\}du-}
            \\
	 \left.
	 \int{\left\{(f^1+g^1)(f^2_u-g^2_u)+(-f^2+g^2)(f^1_u+g^1_u)\right\}dv}
	 \right)
	\end{multline}
 \end{enumerate}
\end{theorem}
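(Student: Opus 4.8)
The plan is to read off $F$ and $G$ by inverting the linear system~\eqref{eq:F and G}. For Part (1), regard $x=x^1+jx^2$ and $n=n^1+jn^2$ as para-complex valued via the identification $\R^2\cong\widetilde{\C}$, and set
\[
 F:=\tfrac{1}{2}(x+\bar{n}), \qquad G:=\tfrac{1}{2}(n-\bar{x}).
\]
A one-line para-complex computation shows that these satisfy $x=F-\bar{G}$ and $n=\bar{F}+G$, so~\eqref{eq:F and G} holds by construction; the only real content is that $F$ and $G$ are para-holomorphic and that $|dF|\neq|dG|$.

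For the para-holomorphy I would feed the duality relations of Proposition~\ref{prop: duality} into the para-Cauchy-Riemann equation~\eqref{eq: para-CR}. With $\xi=(0,0,1)$ the identities $\nu_u=f_v\times\xi$ and $\nu_v=f_u\times\xi$ reduce componentwise to the four scalar equations
\[
 n^1_u=x^2_v,\quad n^1_v=x^2_u,\quad n^2_u=-x^1_v,\quad n^2_v=-x^1_u,
\]
and writing $F=f^1+jf^2$, $G=g^1+jg^2$ with $f^1=\tfrac12(x^1+n^1)$ and so on, one checks that these four equations are exactly the para-Cauchy-Riemann equations for $F$ together with those for $G$. For the non-degeneracy I would compute the affine metric directly: substituting~\eqref{eq:F and G} and using para-holomorphy gives
\[
 g=-\langle dx,dn\rangle=(|G'|-|F'|)(du^2-dv^2),
\]
so the affine metric being indefinite and non-degenerate is equivalent to $|F'|\neq|G'|$, i.e. $|dF|\neq|dG|$. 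The same computation identifies the singular set with $\{|F'|=|G'|\}$, which is where the converse construction produces singularities.

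For the converse, Part (2), I would define $x,n$ by~\eqref{eq:F and G} and integrate $d\phi=-\langle n,dx\rangle$. The first point is well-definedness: expanding $\langle n,dx\rangle=n^1dx^1+n^2dx^2$ in terms of $f^i,g^i$ and using the para-Cauchy-Riemann equations, the $du\wedge dv$ coefficient of $d\langle n,dx\rangle$ cancels, so the $1$-form is closed and $\phi$ exists locally (globally when the relevant period integral has no real periods, in analogy with Theorem~\ref{thm:lscIA-map}). By the very definition of $\phi$ one has $n^1dx^1+n^2dx^2+d\phi=0$ and $\nu(\xi)=1$, so $\nu=(n,1)$ is the conormal of $\psi=(x,\phi)$ with respect to $\xi=(0,0,1)$. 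Since $\xi$ is constant, $D_X\xi=0$ forces $S=0$ and $\tau=0$ in~\eqref{eq: Gauss-Weingarten}, which is the improper-affine-sphere condition, and the explicit expression~\eqref{eq:explicit} is obtained by substituting $F=f^1+jf^2$, $G=g^1+jg^2$ into $-\int\langle n,dx\rangle$ and separating the $du$- and $dv$-parts.

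The step I expect to be the main obstacle is verifying that $\xi=(0,0,1)$ is genuinely the \emph{Blaschke} normal of the constructed $\psi$, rather than merely a transversal field with $S=0$: one must confirm the volume normalization~(2) in the definition of the Blaschke normal and that the Gauss-formula metric of $\psi$ with respect to $\xi$ really equals $-\langle dx,dn\rangle$. I would do this by computing $\det(f_*\partial_u,f_*\partial_v,\xi)=x^1_ux^2_v-x^1_vx^2_u=|F'|-|G'|$ and comparing with $\vol_g(\partial_u,\partial_v)$ for $g=(|G'|-|F'|)(du^2-dv^2)$; the two agree up to an orientation-dependent sign, and reconciling this sign is the delicate part of the bookkeeping. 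A secondary care point is that everything degenerates precisely on $\{|F'|=|G'|\}$, where $\psi$ is no longer an immersion but only a frontal, so the statement must be read in the "possibly with singularities" sense.
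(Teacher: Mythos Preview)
Your argument is correct and reaches the same formulae as the paper, but the route for Part~(1) is genuinely different. The paper does not verify the para-holomorphy of $G$ directly from the duality relations of Proposition~\ref{prop: duality}. Instead it first shows $F$ is para-holomorphic, then invokes Proposition~\ref{prop:duality}: in the coordinates $w_1,w_2$ on $\widetilde{\C}^2$ the two real $2$-form identities collapse to the single para-complex relation $dw_1\wedge dw_2|_{\tilde L_\psi(\Sigma)}=0$, and since $w_1=F$ already induces the para-complex structure coming from $g$, this forces $w_2=G$ to be para-holomorphic as well. Your approach bypasses Proposition~\ref{prop:duality} entirely: you observe that the four scalar duality identities $n^1_u=x^2_v$, $n^1_v=x^2_u$, $n^2_u=-x^1_v$, $n^2_v=-x^1_u$ are, under the linear change $F=\tfrac12(x+\bar n)$, $G=\tfrac12(n-\bar x)$, \emph{equivalent} as a linear system to the para-Cauchy--Riemann equations for $F$ and $G$ simultaneously. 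This is more elementary and self-contained; the paper's argument is more geometric, making visible that $\tilde L_\psi$ lands in a para-complex Lagrangian-type locus, which is what motivates Definition~\ref{def: indefinite}.

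For Part~(2) the paper is terser than you are: it simply asserts that the duality relations are equivalent to the para-Cauchy--Riemann equations and hence to the improper-affine-sphere condition, leaving the Blaschke-normalization check implicit. Your worry about the volume condition is legitimate bookkeeping but not an obstruction; the computation you sketch, $\det(\psi_u,\psi_v,\xi)=|F'|-|G'|$ versus $\operatorname{vol}_g$ for $g=(|G'|-|F'|)(du^2-dv^2)$, goes through after fixing the orientation so that the affine metric has the correct sign.
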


\begin{proof}
\begin{enumerate}
\item
  First, define $F:{\Sigma} \rightarrow {\widetilde{\C}}$ as
\[ F:=f^1+jf^2:={\frac{x^1+n^1}{2}}+j{\frac{x^2-n^2}{2}}. \]
Then, by Proposition~\ref{prop:duality}, we have
\[ f^1_u=f^2_v, \ \ \ f^1_v=f^2_u \]
and $F$ is a para-holomorphic function on ${\Sigma}^2$ with respect to the para-complex structure induced from $g$. \\
\ Take a new coordinate system $\left\{w_1, w_2 \right\}$ on ${\widetilde{\C}}^2$ as,
\[\begin{cases}
    w_1 &={\dfrac{y_0+y_1}{2}}+j{\dfrac{y_2-y_3}{2}}, \\
    w_2 &={\dfrac{-y_0+y_1}{2}}+j{\dfrac{y_2+y_3}{2}}. \\
 \end{cases} \]
Then,  \eqref{eq:2-forms} is equivalent to
\begin{equation}
 \label{eq:2-form}
   dw_1{\wedge}dw_2|_{{\tilde{L}}_{\psi}({\Sigma})}=0.
\end{equation}
Here $w_1=F$ and $w_1$ induce the same para-complex structure on ${\Sigma}$ as the one induced from $g$. Thus \eqref{eq:2-form} implies that $w_2$ also defines a 
para-holomorphic function on ${\Sigma}$. Define a para-holomorphic function $G:{\Sigma} \rightarrow {\tilde{\C}}$ as $G:=w_2$, then we have \eqref{eq:F and G}. The condition that $|dF|\not=|dG|$
is equivalent to the absence of singularity on $\psi$.
\item We can check that $\psi$ defines an indefinite improper affine sphere by direct calculation. However, it is almost obvious that $\psi$ 
defines an indefinite improper affine sphere by the following reason. For $\psi:{\Sigma}^2\rightarrow {\R}^3$, a duality relation is equivalent to the condition to be an affine sphere. On the other hand, a duality relation is equivalent to para-Cauchy-Riemann equation
in this case. 
\end{enumerate}
\end{proof}

Next, we introduce the notion of IA-maps and generalized IA-maps, which is a generalization of improper affine sphere. IA-maps are firstly defined by A. Mart\'{i}nez in \cite{Martinez}, as a generalization of improper affine sphere which permits a singularity. Modifying the Mart\'{i}nez' definition of IA-maps, we define generalized IA-maps as the following.

\begin{definition}[Definition 1 in \cite{Martinez}]
\label{def: Martinez}
 A map $\psi=(x,\phi):{\Sigma}^2\rightarrow {\R}^3(={\R}^2\times{\R})$, where $x:{\Sigma}^2\rightarrow {\R}^2$ 
and $\phi:{\Sigma}^2\rightarrow {\R}$ are smooth maps, is called an \textit{IA-map} (respectively \textit{a generalized IA-map}) if ${\tilde{L}}_{\psi}:{\Sigma}^2 \rightarrow {\C}^2$, which is defined by 
\[{\tilde{L}}_{\psi}:=x+\sqrt{-1}n, 
\]
becomes a SL-immersion (resp. SL-map) with respect to the symplectic structure ${\omega}'$ and the calibration $\Re(\sqrt{-1}{\Omega}')$, 
where ${\omega}'=\dfrac{\sqrt{-1}}{2}(d{\zeta}_1 {\wedge} d{\bar{{\zeta}_1}}+d{\zeta}_2 {\wedge} d{\bar{{\zeta}_2}})$ 
and ${\Omega}'=d{\zeta}_1 {\wedge} d{\zeta}_2$.
\end{definition}

 As an analogy to this definition, it is appropriate to define the class of indefinite improper affine spheres with singularities as
follows.

\begin{definition}
 \label{def: indefinite}
 A map $\psi=(x,\phi):{\Sigma}^2\rightarrow {\R}^3(={\R}^2\times{\R})$, where $x:{\Sigma}^2\rightarrow {\R}^2$ 
and $\phi:{\Sigma}^2\rightarrow {\R}$ are smooth maps, is called an \textit{indefinite IA-map} (respectively, an \textit{indefinite generalized IA-map}) if ${\tilde{L}}_{\psi}:{\Sigma}^2 \rightarrow {\widetilde{\C}}^2$, which is defined by 
\[{\tilde{L}}_{\psi}:=x+jn, 
\]
becomes an immersion (resp. a map which is not necessarily an immersion) and 
${\tilde{L}}_{\psi}^{*}(dy_0{\wedge}dy_1+dy_2{\wedge}dy_3)
 ={\tilde{L}}_{\psi}^{*}(dy_0{\wedge}dy_2+dy_1{\wedge}dy_3)=0$
holds.
\end{definition}

From now on, to avoid the confusion, we call an IA-map (respectively, a generalized IA-map) with positive definite affine
metric an \textit{locally strongly convex IA-maps} (resp. an \textit{locally strongly convex generalized IA-map}).

\section{Singularities of IA-maps}
The representation formula for locally strongly convex improper affine spheres and that for indefinite ones look very similar. 
However, singular points which appear in two representation formulae have quite different properties each other.
Concretely, in the locally strongly convex case, except for on branch points, all the singular points are fronts. 
On the other hand, in the indefinite case, a singular point is 
not necessarily a front even if it is not a branch point. Indeed, we can easily find examples of frontal maps which are not fronts in indefinite case by direct calculations. 

\subsection{Locally strongly convex case}
 To study the properties of singularities on a locally strongly convex IA-map ${\psi}:{\Sigma}^2\rightarrow {\R}^3$,
we will first check the condition for $p \in \Sigma^2$ to be a singular point of $\psi$, which is also given in 
\cite{Martinez}.

\begin{proposition}[\cite{Martinez}]
 \label{prop: sing. pt. on l.s.c.}
 Let $\psi:{\Sigma} \rightarrow {\R}^{3}$ be an locally strongly convex generalized IA-map¡¥
From the representation formula ~\eqref{eq:IA-map} for locally strongly convex IA-maps in
Theorem~\ref{thm:lscIA-map},
$p\in{\Sigma}^2$ is a singular point of $\psi$ if and only if $|dF|=|dG|$ holds on $p\in{\Sigma}^2$. 
\end{proposition}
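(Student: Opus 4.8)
The plan is to reduce the study of the singular set of $\psi=(x,\phi)$ to that of its planar part $x=\pi\circ\psi=G+\bar F$, the key point being that the third coordinate $\phi$ adds nothing to the rank of the differential. Since the first two rows of $d\psi$ are exactly $dx$, one has automatically $\Ker(d\psi)=\Ker(dx)\cap\Ker(d\phi)\subseteq\Ker(dx)$. I would prove the reverse inclusion by showing that $d\phi$ factors through $dx$, namely
\begin{equation*}
 d\phi=-\inner{n}{dx},
\end{equation*}
where $n=\bar F-G$ is the planar part of the conormal $\nu=(\bar F-G,1)$ and $\inner{\ }{\ }$ is the Euclidean product of $\R^2\cong\C$. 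Granting this, any $w\in T_p\Sigma$ with $dx(w)=0$ satisfies $d\phi(w)=-\inner{n}{dx(w)}=0$, so that $\Ker(d\psi)=\Ker(dx)$ and $\psi$ is non-immersive at $p$ precisely where $x$ is.

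First I would verify the identity $d\phi=-\inner{n}{dx}$ by differentiating the explicit expression in \eqref{eq:IA-map}. Using $\tfrac12\,d(|G|^2)=\Re(\bar G\,dG)$, $\tfrac12\,d(|F|^2)=\Re(\bar F\,dF)$ and $d\,\Re(GF)=\Re(G\,dF+F\,dG)$, together with $d\,\Re\!\big(-2\!\int F\,dG\big)=-2\,\Re(F\,dG)$ for the period term, the two $\Re(F\,dG)$ contributions cancel and one is left with $d\phi=\Re\big((\bar G-F)\,dG\big)+\Re\big((G-\bar F)\,dF\big)$. Substituting $\bar G-F=-\bar n$ and $G-\bar F=-n$ yields $d\phi=-\Re(\bar n\,dG+n\,dF)$; on the other hand $dx=dG+d\bar F$ gives $\inner{n}{dx}=\Re(\bar n\,dG)+\Re(\bar n\,d\bar F)=\Re(\bar n\,dG)+\Re(n\,dF)$, which is exactly $-d\phi$. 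Although $\int F\,dG$ is only defined up to periods, its exterior derivative $F\,dG$ is local, so $d\phi$ is an honest one-form and this nonlocal term causes no difficulty.

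The final step is to translate $\Ker(dx)\neq 0$ into a condition on $F,G$. As $\alpha=(F,G)$ is a holomorphic curve, in a local holomorphic coordinate $z$ one has $dF=F'\,dz$, $dG=G'\,dz$, so $x=G+\bar F$ has $x_z=G'$ and $x_{\bar z}=\overline{F'}$; hence the Jacobian of $x:\Sigma\to\R^2$ in the underlying real coordinates is $|x_z|^2-|x_{\bar z}|^2=|G'|^2-|F'|^2$. This vanishes exactly when $|G'|=|F'|$, i.e. when $|dF|=|dG|$, and the same condition covers the branch points $F'=G'=0$ uniformly. Combined with $\Ker(d\psi)=\Ker(dx)$ from the first paragraph, this shows $p$ is a singular point of $\psi$ if and only if $|dF|=|dG|$. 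I expect the conceptual core to be the identity $d\phi=-\inner{n}{dx}$: once it is established the vertical direction is irrelevant and the problem collapses to the planar Jacobian, while the only real computation — differentiating $\phi$ — is routine apart from keeping track of conjugates and real parts.
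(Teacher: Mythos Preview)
Your argument is correct, and at its core it is the same observation the paper makes, just phrased more conceptually. The paper writes out $\psi_u,\psi_v$ in real coordinates and observes that both lie in the span of the two fixed vectors $(1,0,g^1-f^1)$ and $(0,1,g^2+f^2)$; the $2\times 2$ coefficient matrix in that span is exactly $dx$, and its determinant is $|G'|^2-|F'|^2$. Your identity $d\phi=-\inner{n}{dx}$ is precisely the coordinate-free statement that $\psi_u,\psi_v$ sit in that two-plane (equivalently, that $\nu=(n,1)$ is normal to $\psi$), and your use of $|x_z|^2-|x_{\bar z}|^2$ for the Jacobian of $x$ replaces the paper's real-coordinate determinant by the standard complex formula. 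So the two proofs are the same computation read at different levels of abstraction; yours isolates the geometric reason (the conormal relation forces $\Ker d\psi=\Ker dx$) while the paper's version keeps everything explicit, which then feeds directly into the subsequent front/frontal calculations in real coordinates.
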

\begin{proof}
 From the explicit form of the representation formula ~\eqref{eq:explicit}, the differentiation of $\psi$ becomes
\[\begin{split}
  {\psi}_u &= (f^1_u+g^1_u, g^2_u-f^2_u, (g^1-f^1)(g^1_u+f^1_u)+(g^2+f^2)(g^2_u-f^2_u)), \\
  {\psi}_v &= (-f^2_u-g^2_u, g^1_u-f^1_u, (-g^1+f^1)(g^2_u+f^2_u)+(g^2+f^2)(g^1_u-f^1_u)). \\
\end{split}\]
This can be rewritten as 
\begin{equation}
 \label{eq: indefinite}
 \begin{split}
  {\psi}_u &= (f^1_u+g^1_u)(1,0,g^1-f^1)+(g^2_u-f^2_u)(0,1,g^2+f^2) \\
  {\psi}_v &= -(g^2_u+f^2_u)(1,0,g^1-f^1)+(g^1_u-f^1_u)(0,1,g^2+f^2). \\
\end{split}
\end{equation}
The linearly independence of the above two vectors implies that the condition to be a singular points is
\[
 (f^1_u+g^1_u)(g^1_u-f^1_u)-(g^2_u-f^2_u)\left\{-(g^2_u+f^2_u)\right\}=0 .
\]
This is equivalent to $|dF|=|dG|$. 
\end{proof}

Next we show that the singularity of locally strongly convex improper affine spheres 
is always locally a front unless it is a branch point, that is, $\psi_u=\psi_v=0$ holds 
on that point. 
In other words, Legendrian lift $L_{\psi}=(\psi,{\tilde{\nu}}): {\Sigma}^2
\rightarrow {T_1{\R}^3}$ is immersive on ${\Sigma}^2$ (even on the singular point of $\psi$), 
where $\tilde{\nu}$ is the unit normal of $\psi$.

\begin{proposition}
 \label{prop:front}
 Singularity of a locally strongly convex generalized IA-map is always locally a front if $|dF|=|dG|=0$ does not hold. 
\end{proposition}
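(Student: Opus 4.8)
The plan is to realize $\psi$ as a frontal with an explicit unit normal and then verify directly that its Legendrian lift is an immersion away from branch points. By Theorem~\ref{thm:lscIA-map} the map is produced from a holomorphic curve $\alpha=(F,G):\Sigma\to\C^2$, its conormal is $\nu=(\bar F-G,1)$, and I set $x:=G+\bar F$ and $n:=\bar F-G$, so that $\psi=(x,\phi)$ and $\nu=(n,1)$. Since $\nu$ is the conormal, $\eucinner{d\psi}{\nu}=0$ holds wherever $\psi$ is immersive, and this orthogonality persists at the singular points by continuity (or by a direct computation using the explicit form of $\phi$). As $|\nu|=\sqrt{|n|^2+1}\ge 1>0$, the map $\tilde\nu:=\nu/|\nu|$ is a globally defined unit normal, so $\psi$ is a frontal and its Legendrian lift $L_\psi=(\psi,\tilde\nu)$ is defined on all of $\Sigma$. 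By definition $\psi$ is a front exactly when $L_\psi$ is an immersion, so it suffices to rule out a nonzero tangent vector $X$ with $d\psi(X)=0$ and $d\tilde\nu(X)=0$.

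The key structural observation, which is where the special form of an improper affine sphere enters, is that the last coordinate of $\nu$ is the constant $1$. I would first establish
\[
  d\tilde\nu(X)=0 \iff dn(X)=0 .
\]
Indeed, $d\tilde\nu(X)=0$ forces $d\nu(X)$ to be a scalar multiple of $\nu$; but $d\nu(X)=(dn(X),0)$ has vanishing third entry while $\nu$ has third entry $1$, so that scalar must be $0$ and hence $d\nu(X)=0$, i.e. $dn(X)=0$. The converse is immediate, since $dn(X)=0$ gives both $d\nu(X)=0$ and $d|\nu|(X)=0$. Consequently $L_\psi$ fails to be an immersion at $p$ precisely when there is a nonzero $X$ with $d\psi(X)=0$ and $dn(X)=0$; in particular $dx(X)=0$ and $dn(X)=0$.

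Finally I would carry out a short computation in a local holomorphic coordinate $z$. Writing $\zeta:=dz(X)$ and using $dx=G'\,dz+\overline{F'}\,d\bar z$ and $dn=\overline{F'}\,d\bar z-G'\,dz$, the conditions $dx(X)=0$ and $dn(X)=0$ read $G'\zeta+\overline{F'}\,\bar\zeta=0$ and $\overline{F'}\,\bar\zeta-G'\zeta=0$. Adding and subtracting these gives $G'\zeta=0$ and $\overline{F'}\,\bar\zeta=0$. Since $X\ne 0$ means $\zeta\ne 0$, this forces $F'=G'=0$, i.e. $|dF|=|dG|=0$, which is exactly the branch-point condition. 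Taking the contrapositive: at any singular point where $|dF|=|dG|=0$ does not hold, $L_\psi$ is an immersion, so $\psi$ is a front there.

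The routine part is the coordinate computation of the last paragraph. The point that genuinely has to be handled with care is the frontal set-up of the first paragraph — namely that $\tilde\nu$ really is a unit normal at the singular points, so that the Legendrian lift is defined there at all — together with the reduction $d\tilde\nu(X)=0\iff dn(X)=0$, which is the only place the constancy of the conormal's last component is used and which encodes the geometry that makes the statement work.
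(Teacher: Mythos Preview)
Your proof is correct and considerably cleaner than the paper's primary argument. The paper's proof of Proposition~\ref{prop:front} is a brute-force computation in real coordinates: it writes out $\tilde\nu_u$ and $\tilde\nu_v$ explicitly, identifies the null direction $(g^2_u+f^2_u)\partial_u+(g^1_u+f^1_u)\partial_v$ of $d\psi$, and then checks by hand that this vector does not annihilate $d\tilde\nu$ unless $|dF|=|dG|=0$. Your approach instead isolates the structural reason the computation works: the last coordinate of the conormal $\nu=(n,1)$ is constant, which forces $d\tilde\nu(X)=0\iff dn(X)=0$, and then the problem reduces to the $2\times 2$ complex-linear system for $dx(X)$ and $dn(X)$, which is trivially nonsingular away from $F'=G'=0$. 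This is precisely the alternative argument that the paper sketches, without details, in Remark~\ref{rmk: branch}: there it is phrased as ``$L_\psi$ is an immersion if and only if $\tilde L_\psi=x+\sqrt{-1}\,n$ is an immersion,'' which is exactly your pair of conditions $dx(X)=0$, $dn(X)=0$. So what you have done is supply a clean proof of that remark; the payoff is that your argument transparently explains why the locally strongly convex case behaves so differently from the indefinite one (where the analogous $2\times 2$ system over $\widetilde\C$ can be singular without $F'=G'=0$), whereas the paper's main computation obscures this.
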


\begin{proof}
 The proof follows from the direct calculation. We check that ${\Ker}d{\psi} \not= {\Ker}{d{\tilde{\nu}}}$ holds 
at singular points of $\psi$.

From Theorem~\ref{thm:lscIA-map}, the explicit representation of ${\tilde{\nu}}$ is
\[
   \tilde{\nu} = \frac{1}{(1+(f^1-g^1)^2+(f^2+g^2)^2)^{\frac{1}{2}}}(f^1-g^1, -(f^2+g^2), 1),
\]and its differentiations are
\begin{alignat*}{2}
 {\tilde{\nu}}_u &=
         \frac{1}{{\sqrt{1+(f^1-g^1)^2+(f^2+g^2)^2}}^3}\cdot\\
        &
        \hspace{1in}
        \begin{aligned}
        \biggl(
           &(f^1_u-g^1_u)(1+(f^2+g^2)^2)-(f^2+g^2)(f^1-g^1)(f^2_u+g^2_u), \\
           &-(f^2_u+g^2_u)(1+(f^1-g^1)^2)+(f^2+g^2)(f^1-g^1)(f^1_u-g^1_u), \\
           &-(f^1-g^1)(f^1_u-g^1_u)-(f^2+g^2)(f^2_u+g^2_u)
        \biggr),
        \end{aligned} \\
 {\tilde{\nu}}_v &=
         \frac{1}{{\sqrt{1+(f^1-g^1)^2+(f^2+g^2)^2}}^3}\cdot\\
        &
        \hspace{1in}
        \begin{aligned}
        \biggl(
           &-(f^2_u-g^2_u)(1+(f^2+g^2)^2)-(f^2+g^2)(f^1-g^1)(f^1_u+g^1_u), \\
           &-(f^1_u+g^1_u)(1+(f^1-g^1)^2)-(f^2+g^2)(f^1-g^1)(f^2_u-g^2_u), \\
           &(f^1-g^1)(f^2_u-g^2_u)-(f^2+g^2)(f^1_u+g^1_u)
        \biggr).
        \end{aligned}        
\end{alignat*}
 The proof of Proposition~\ref{prop: sing. pt. on l.s.c.} implies that
\begin{equation}
(g^2_u+f^2_u){\psi}_u+(g^1_u+f^1_u){\psi}_v = 0
\end{equation}
holds at a singular point of $f$. Then, to prove ${L_{\psi}}$ is an immersion, we have only to show that
\begin{equation}
(g^2_u+f^2_u){\tilde{\nu}}_u+(g^1_u+f^1_u){\tilde{\nu}}_v \not= 0 
\end{equation}
holds at any singular point.\\
\ Here,
\[
 \begin{split}
    &{{\sqrt{1+(f^1-g^1)^2+(f^2-g^2)^2}}^3}\left\{(g^2_u+f^2_u){\tilde{nu}}_u+(g^1_u+f^1_u){\tilde{\nu}}_v\right\} \\
   =(&-(g^1-f^1)(g^2+f^2)((g^1_u+f^1_u)^2+(g^2_u+f^2_u)^2)+2(1+(f^2+g^2)^2)(f^1_ug^2_u-f^2_ug^1_u), \\
     & -(1+(f^1-g^1)^2)((g^1_u+f^1_u)^2+(g^2_u+f^2_u)^2)+2(f^1-g^1)(f^2+g^2)(f^1_ug^2_u-f^2_ug^1_u), \\
     & -(f^2+g^2)((f^1_u+g^1_u)^2+(g^2_u+f^2_u)^2)-2(f^1-g^1)(f^1_ug^2_u-f^2_ug^1_u) ) \\
   =(&-(g^1-f^1)(g^2+f^2)A+2(1+(f^2+g^2)^2)B, -(1+(f^1-g^1)^2)A+2(f^1-g^1)(f^2+g^2)B, \\
     & -(f^2+g^2)A-2(f^1-g^1)B ). \\
 \end{split}
\]
where, $A=(f^1_u+g^1_u)^2+(g^2_u+f^2_u)^2$ and $B=(f^1_ug^2_u-f^2_ug^1_u)$.
Here $A>0$ because $|dF|=|dG|=0$ does not hold.
 If\[
{{\sqrt{1+(f^1-g^1)^2+(f^2-g^2)^2}}^3}\left\{(g^2_u+f^2_u){\tilde{\nu}}_u+(g^1_u+f^1_u){\tilde{\nu}}_v\right\}=0
\]
holds, then the computation of $B/A$ by using the fact that each element becomes 0 gives $1+(f^1-g^1)^2+(f^2+g^2)^2=0$. However, this is contradiction.
So
\[
(g^2_u+f^2_u){\nu}_u+(g^1_u+f^1_u){\nu}_v \not= 0 
\]
holds at any point (even at a singular point)¡¥\\
\ This completes the proof of Proposition~\ref{prop:front}. 
\end{proof}

\begin{remark}
\label{rmk: branch}
 We can state the relation between $\psi$, $\tilde{L}_{\psi}$ and $L_{\psi}$ as follows. 
$L_{\psi}$ is an immersion if and only if ${\tilde{L}}_{\psi}$ is an immersion.
So a locally strongly convex generalized 
IA-map $\psi$ is a front if and only if ${\tilde{L}}_{\psi}$ is an immersion. 

On the other hand, if ${\tilde{L}}_{\psi}$ is not immersive, then ${\psi}_u={\psi}_v=0$ holds at that point. 
On such a point $|dF|=|dG|=0$ holds. We call such a point a branch point.
Note that branch points are isolated because $F$ and $G$ are both holomorphic functions. 
Above fact can be considered as another proof of Proposition\ref{prop:front}.
\end{remark}

\subsection{Indefinite case}
 So far in this section, we consider the singularity of locally strongly convex IA-maps. 
Unlike the locally strongly convex case, any indefinite improper affine sphere (with singularities) 
is not necessarily a front. Indeed, we can construct several concrete examples 
which are frontal maps, but not fronts. So we shall check the property of 
singularity of indefinite improper affine spheres in the rest of this section.

First, similar to the locally strongly convex case, we check the condition for a pair of para-holomorphic functions so that 
the corresponding indefinite improper affine 
sphere has a singular point. 
\begin{proposition}
 \label{prop: sing. pt. on indefinite}
 Let $\psi:{\Sigma}^2 \rightarrow {\R}^{3}$ be an indefinite generalized IA-map¡¥ 
Then, $p\in{\Sigma}^2$ is a singular point of $\psi$ if and only if $|dF|=|dG|$ holds on $p\in{\Sigma}^2$. 
\end{proposition}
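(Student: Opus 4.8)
The plan is to run the same argument used for the locally strongly convex case in Proposition~\ref{prop: sing. pt. on l.s.c.}, but now starting from the explicit representation ~\eqref{eq:explicit}. First I would differentiate $\psi$ in $u$ and in $v$, using the para-Cauchy--Riemann equations ~\eqref{eq: para-CR}, namely $f^1_u=f^2_v,\ f^1_v=f^2_u$ for $F$ and the analogous identities $g^1_u=g^2_v,\ g^1_v=g^2_u$ for $G$, to eliminate all $v$-derivatives. This yields
\begin{align*}
\psi_u &= (f^1_u-g^1_u,\ f^2_u+g^2_u,\ \phi_u),\\
\psi_v &= (f^2_u-g^2_u,\ f^1_u+g^1_u,\ \phi_v),
\end{align*}
where $\phi_u$ and $\phi_v$ are read off directly as the two integrands of ~\eqref{eq:explicit}, that is $\phi_u=-(f^1+g^1)(f^1_u-g^1_u)-(-f^2+g^2)(f^2_u+g^2_u)$ and $\phi_v=-(f^1+g^1)(f^2_u-g^2_u)-(-f^2+g^2)(f^1_u+g^1_u)$.

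The key step is to notice that both tangent vectors decompose along the two fixed vectors $e_1:=(1,0,-(f^1+g^1))$ and $e_2:=(0,1,\,f^2-g^2)$:
\begin{align*}
\psi_u &= (f^1_u-g^1_u)\,e_1+(f^2_u+g^2_u)\,e_2,\\
\psi_v &= (f^2_u-g^2_u)\,e_1+(f^1_u+g^1_u)\,e_2.
\end{align*}
I would verify this simply by checking that the third components reproduce $\phi_u$ and $\phi_v$; this is exactly where the particular coefficients $-(f^1+g^1)$ and $f^2-g^2$ coming from ~\eqref{eq:explicit} get used. Because $e_1$ and $e_2$ are linearly independent in $\R^3$ (their first two entries form the $2\times2$ identity block), the vectors $\psi_u$ and $\psi_v$ are linearly dependent --- equivalently, $p$ is a singular point of $\psi$ --- if and only if the $2\times2$ matrix of coefficients is singular.

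Finally I would compute that determinant,
\[
\det\begin{pmatrix} f^1_u-g^1_u & f^2_u+g^2_u \\ f^2_u-g^2_u & f^1_u+g^1_u \end{pmatrix}
=\big\{(f^1_u)^2-(f^2_u)^2\big\}-\big\{(g^1_u)^2-(g^2_u)^2\big\},
\]
and identify the two bracketed terms with $|dF|$ and $|dG|$, using $F'=f^1_u+jf^2_u$ together with $|z|=z\bar z$ for para-complex numbers. Thus the vanishing of the determinant is precisely the condition $|dF|=|dG|$, which proves the proposition. I do not anticipate a real obstacle: the only delicate points are the sign bookkeeping in the para-Cauchy--Riemann substitution and confirming that the decomposition coefficients genuinely recover $\phi_u$ and $\phi_v$; once these are settled the statement follows, exactly paralleling the locally strongly convex computation.
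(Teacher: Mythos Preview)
Your proposal is correct and follows essentially the same approach as the paper: differentiating the explicit formula~\eqref{eq:explicit}, decomposing $\psi_u,\psi_v$ along the two independent vectors $(1,0,-(f^1+g^1))$ and $(0,1,f^2-g^2)$, and identifying the vanishing of the $2\times2$ coefficient determinant with $|dF|=|dG|$. The paper's proof is the same computation, stated slightly more tersely.
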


\begin{proof}
 The proof is almost the same as that of Theorem\ref{prop: sing. pt. on l.s.c.} except for the detailed calculation.
 
 From the explicit form of the representation formula ~\eqref{eq:explicit}, the differentiation of $\psi$ becomes
\[\begin{split}
  {\psi}_u &= (f^1_u-g^1_u, f^2_u+g^2_u, -(f^1+g^1)(f^1_u-g^1_u)-(-f^2+g^2)(f^2_u+g^2_u)), \\
  {\psi}_v &= (f^2_u-g^2_u, f^1_u+g^1_u, -(f^1+g^1)(f^2_u-g^2_u)-(-f^2+g^2)(f^1_u+g^1_u)). \\
\end{split}\]
This can be rewritten as 
\begin{equation}
\label{eq:frontal}
\begin{split}
  {\psi}_u &= (f^1_u-g^1_u)(1,0,-(f^1+g^1))+(f^2_u+g^2_u)(0,1,f^2-g^2) \\
  {\psi}_v &= (f^2_u-g^2_u)(1,0,-(f^1+g^1))+(f^1_u+g^1_u)(0,1,f^2-g^2). \\
\end{split}
\end{equation}
The linearly independence of the above two vectors implies that the condition to be a singular points is
\[
 (f^1_u-g^1_u)(f^1_u+g^1_u)-(f^2_u+g^2_u)(f^2_u-g^2_u)=0 .
\]
This is equivalent to $|dF|=|dG|$. 
\end{proof}

As mentioned above, there exist frontal maps which are not fronts in the representation formula ~\eqref{eq:explicit}. 
Next, we shall check the condition for an indefinite
improper affine sphere to have this type of singularity.

\begin{proposition}
 \label{prop:not front}
 For an indefinite generalized IA-map $\psi:{\Sigma}^2 \rightarrow {\R}^{3}$, the following two conditions are equivalent:
\begin{enumerate}
 \item $\psi$ is a frontal map which is not a front at $p \in {\Sigma}^2$ and $p$ is not a branch point.
 \item One of the following two conditions are satisfied at $p\in{\Sigma}^2$.
  \begin{enumerate}
    \item $f^1_u=f^2_u$ \ and \ $g^1_u=g^2_u$,
    \item $f^1_u=-f^2_u$ \ and \ $g^1_u=-g^2_u$.
  \end{enumerate}
\end{enumerate}
\end{proposition}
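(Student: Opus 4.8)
The plan is to show first that $\psi$ is \emph{always} a frontal, then to rewrite the condition ``frontal but not front, at a non-branch singular point'' as a purely linear condition on the null direction, and finally to solve that condition. Starting from \eqref{eq:frontal}, I write
\[
\psi_u=(f^1_u-g^1_u)\vect{e}_1+(f^2_u+g^2_u)\vect{e}_2,\qquad
\psi_v=(f^2_u-g^2_u)\vect{e}_1+(f^1_u+g^1_u)\vect{e}_2,
\]
with $\vect{e}_1=(1,0,-(f^1+g^1))$ and $\vect{e}_2=(0,1,f^2-g^2)$. Their cross product $N:=\vect{e}_1\times\vect{e}_2=(f^1+g^1,\,g^2-f^2,\,1)$ is orthogonal to both, so $\langle d\psi,N\rangle=0$; since the last entry of $N$ is $1$, $N$ never vanishes and $\tilde{\nu}:=N/|N|$ (Euclidean norm) is a globally defined unit normal. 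Thus $\psi$ is a frontal, exactly as in the locally strongly convex case treated in Proposition~\ref{prop:front}.

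Next I reduce to a kernel condition. By Proposition~\ref{prop: sing. pt. on indefinite}, at a singular point $|dF|=|dG|$ and, if it is not a branch point, $d\psi$ has rank one, so $\Ker d\psi$ is spanned by a single null vector $\eta=(a,b)\neq 0$. By definition $\psi$ is a front at $p$ iff its Legendrian lift $L_{\psi}=(\psi,\tilde{\nu})$ is immersive, i.e.\ iff $d\tilde{\nu}(\eta)\neq 0$. Because $\tilde{\nu}=N/|N|$ one has $d\tilde{\nu}(\eta)=0\iff dN(\eta)\parallel N$; but $N_u$ and $N_v$ both have vanishing third entry while $N$ has third entry $1$, so $dN(\eta)\parallel N\iff dN(\eta)=0$. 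Hence $\psi$ is a non-front at the non-branch singular point $p$ precisely when the null vector also satisfies $\eta\in\Ker dN$.

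Now I set up and solve the linear system. Put $P:=f^1_u,\ Q:=f^2_u,\ R:=g^1_u,\ S:=g^2_u$ and use \eqref{eq: para-CR} for the $v$-derivatives of $N$. The two memberships $\eta\in\Ker d\psi$ and $\eta\in\Ker dN$ give four scalar equations in $(a,b)$; the key step is to add and subtract them in pairs, which decouples them into
\[
\begin{pmatrix}P&Q\\ Q&P\end{pmatrix}\!\begin{pmatrix}a\\ b\end{pmatrix}=0,
\qquad
\begin{pmatrix}R&S\\ S&R\end{pmatrix}\!\begin{pmatrix}a\\ b\end{pmatrix}=0.
\]
A nonzero $\eta$ solving both exists iff these two symmetric matrices are simultaneously singular \emph{and} share a common kernel line. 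Each matrix is singular iff $P=\pm Q$ (resp.\ $R=\pm S$), with kernel $\R(1,-1)$ when the sign is $+$ and $\R(1,1)$ when it is $-$ (and the whole plane when both entries vanish). Matching the kernel lines forces either $P=Q,\ R=S$, that is $f^1_u=f^2_u$ and $g^1_u=g^2_u$ (condition (2)(a)), or $P=-Q,\ R=-S$, that is $f^1_u=-f^2_u$ and $g^1_u=-g^2_u$ (condition (2)(b)); the mixed pairings share only $0$. Conversely, either (2)(a) or (2)(b) forces $|dF|=|dG|=0$, so $p$ is singular and a common kernel line is present, producing a non-front.

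The only point requiring care is the fully degenerate case $P=Q=R=S=0$: there $\psi_u=\psi_v=0$, so $p$ is a branch point, which is excluded by the hypothesis in (1); everywhere else the kernel-matching bookkeeping gives exactly the stated dichotomy. I expect this bookkeeping --- keeping track of which kernel line each matrix contributes and isolating the branch-point degeneracy --- to be the only genuine subtlety, the rest being the differentiation already recorded in \eqref{eq:frontal} together with the para-Cauchy--Riemann relations.
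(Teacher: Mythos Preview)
Your argument is correct and somewhat cleaner than the paper's. Both proofs begin the same way, observing from \eqref{eq:frontal} that $\psi_u,\psi_v$ lie in the span of $\vect{e}_1,\vect{e}_2$, so that $N=(f^1+g^1,\,g^2-f^2,\,1)$ furnishes a nowhere-vanishing normal and $\psi$ is always frontal. From there the paper proceeds by explicitly differentiating the \emph{unit} normal $\tilde\nu=N/\sqrt{\Delta}$, obtaining the lengthy expressions \eqref{eq:unit normal}, and then comparing the proportionality factors in $\psi_u=\lambda\psi_v$ and $\tilde\nu_u=\mu\tilde\nu_v$ to conclude that $(L_\psi)_u\parallel(L_\psi)_v$ forces $\lambda=\mu$, which after case analysis yields (2)(a) or (2)(b). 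Your route avoids this computation entirely: since the third component of $N$ is constant, $d\tilde\nu(\eta)=0$ is equivalent to $dN(\eta)=0$, and combining $\eta\in\Ker d\psi$ with $\eta\in\Ker dN$ and adding/subtracting the resulting four scalar equations decouples them into the two $2\times 2$ systems you display. This is a genuinely different organization of the linear algebra; it replaces the paper's ratio comparison and its separate treatment of the case where one of $f^1_u\pm g^1_u$, $f^2_u\pm g^2_u$ vanishes by a single kernel-matching argument, with the branch-point degeneracy $P=Q=R=S=0$ isolated exactly as you note. The payoff is that you never need the explicit form of $\tilde\nu_u,\tilde\nu_v$, at the cost of the small extra observation about the third component of $N$.
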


\begin{proof}
 From ~\eqref{eq:frontal}, we already know that at any point, the tangent space of 
any indefinite generalized IA-map is spanned by two vectors $(1,0,-(f^1+g^1))$ and $(0,1,f^2-g^2)$. 
This means every infinite generalized IA-map has an unit normal, which is parallel to $(f^1+g^1, -f^2+g^2, 1)$. 
So every indefinite generalized IA-map is a frontal map. Therefore the condition (1)
is satisfied if and only if $\psi$ is not a front, i.e., $L_{\psi}$ is not a immersion. 
Thus we have only to check the condition for $L_{\psi}$ not to be an immersion. 

From Theorem\ref{thm:rep}, the unit normal ${\tilde{\nu}}$ of $\psi$ is 
\[
   {\tilde{\nu}} = \frac{1}{\sqrt{\Delta}}(f^1+g^1, -f^2+g^2, 1)
\]where $\Delta=(f^1)^2+(f^2)^2+(g^1)^2+(g^2)^2+2f^1g^1-2f^2g^2+1$. \\
\ Then, we have 
\begin{equation}
\label{eq:unit normal}
\begin{split}
   {\tilde{\nu}}_u &= -{\frac{{\Delta}_u}{2{\sqrt{\Delta}}^3}}(f^1+g^2, -f^2+g^2, 1) + {\frac{1}{\sqrt{\Delta}}}(f^1_u+g^1_u, -f^2_u+g^2_u, 0) \\
                   &= \frac{1}{2{\sqrt{\Delta}}^3} \left\{-\Delta_u(f^1+g^1, -f^2+g^2,1) + 2\Delta(f^1_u+g^1_u, -f^2_u+g^2_u, 0) \right\} \\
                   &= \frac{1}{2{\sqrt{\Delta}}^3} (-2(f^2_u-g^2_u)(f^2-g^2)(f^1+g^1)+2(-f^1_u+g^1_u)(f^2-g^2)^2+2(f^1_u+g^1_u), \\
                   & \ \ \ \ \ \ \ \ \ \ \ \ \ \ 2(f^1_u+g^1_u)(f^1+g^1)(f^2-g^2)+2(-f^2_u+g^2_u)(f^1+g^1)^2+2(-f^2_u+g^2_u), \\
                   & \ \ \ \ \ \ \ \ \ \ \ \ \  -2(f^1_u+g^1_u)(f^1+g^1)-2(f^2_u-g^2_u)(f^2-g^2) ), \\
   {\tilde{\nu}}_v &= -{\frac{\Delta_v}{2{\sqrt{\Delta}}^3}}(f^1+g^2, -f^2+g^2, 1) + {\frac{1}{\sqrt{\Delta}}}(f^2_u+g^2_u, -f^1_u+g^1_u, 0)  \\
                   &= \frac{1}{2{\sqrt{\Delta}}^3} \left\{ -\Delta_v(f^1+g^1, -f^2+g^2,1) + 2\Delta(f^2_u+g^2_u, -f^1_u+g^1_u, 0) \right\} \\
                   &= \frac{1}{2{\sqrt{\Delta}}^3} (-2(f^1_u-g^1_u)(f^1+g^1)(f^2-g^2)+2(f^2_u+g^2_u)(f^2-g^2)^2+2(f^2_u+g^2_u), \\
                   & \ \ \ \ \ \ \ \ \ \ \ \ \ \ 2(f^2_u+g^2_u)(f^1+g^1)(f^2-g^2)+2(-f^1_u+g^1_u)(f^1+g^1)^2+2(-f^1_u+g^1_u), \\
                   & \ \ \ \ \ \ \ \ \ \ \ \ \  -2(f^2_u+g^2_u)(f^1+g^1)-2(f^1_u-g^1_u)(f^2-g^2) ). \\  
\end{split}
\end{equation}
First, we consider the case that neither $f^1_u+g^1_u$, $f^1_u-g^1_u$, $f^2_u+g^2_u$ nor $f^2_u-g^2_u$
does not equal to $0$.
Since $|dF|=|dG|$ holds on a singular point of $\psi$, ~\eqref{eq:frontal} implies that
\begin{equation}
\label{eq:f}
   {\psi}_u=\frac{f^1_u-g^1_u}{f^2_u-g^2_u}{\psi}_v= \frac{f^2_u+g^2_u}{f^1_u+g^1_u}{\psi}_v
\end{equation}also holds on a singular point of $\psi$.
On the other hand, by the same reason, ~\eqref{eq:unit normal} implies that
\begin{equation}
\label{eq:nu}
{\tilde{\nu}}_u = \frac{f^2_u-g^2_u}{f^1_u-g^1_u}{\tilde{\nu}}_v= \frac{f^1_u+g^1_u}{f^2_u+g^2_u}{\tilde{\nu}}_v
\end{equation}
holds on a singular point of $\psi$.

By comparing ~\eqref{eq:f} and ~\eqref{eq:nu}, and taking care of the fact that $\psi$ is 
not a front if and only if $(L_{\psi})_u$ and $(L_{\psi})_v$ are parallel, we have 
condition (2) after elementary calculation.

In the case that either $f^1_u+g^1_u$, $f^1_u-g^1_u$, $f^2_u+g^2_u$ or $f^2_u-g^2_u$ equals $0$, we have
condition (2) by taking care of the fact that $\psi$ is not a front at $p$ and that $p$ is not
a branch point.
\end{proof}

\begin{remark}
In the same manner as Remark~\ref{rmk: branch}, we have the following relation between $\psi$, 
$\tilde{L_{\psi}}$ and $L_{\psi}$ for an indefinite generalized IA-map.

$L_{\psi}$ is an immersion if and only if ${\tilde{L}}_{\psi}$ is an immersion.
So an indefinite generalized IA-map $\psi$ is a front if and only if 
${\tilde{L}}_{\psi}$ is an immersion. 

On the other hand, if $\tilde{L_{\psi}}$ is not an immersion, then $|dF|=|dG|=0$ hold and
it follows that $\psi$ is a frontal which is not a front or $p$ is a branch point.   
\end{remark}

Now, we end this section by remarking that cuspidal cross caps never appear as 
the singularities of indefinite generalized IA-maps. This fact is proved by 
using the criterion for cuspidal cross caps (Fact~\ref{fact: CCR}) and the condition 
for indefinite generalized IA-map to be a frontal but not a front (Proposition~\ref{prop:not front}).

\begin{theorem}
\label{thm: not CCR}
 Let $\psi:{\Sigma}^2\rightarrow {\R}^3$ be an indefinite generalized IA-map 
and $p\in{\Sigma}^2$ be a point. Then the germ of $\psi$ at $p$ is not $\mathcal{A}$-equivalent 
to the cuspidal cross cap. 
\end{theorem}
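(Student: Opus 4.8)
The plan is to argue by contradiction with the cuspidal-cross-cap criterion, Fact~\ref{fact: CCR}. First I record the $\mathcal{A}$-invariant features of the model $f_{CCR}$: it is a frontal that is \emph{not} a front, and its differential has rank one at the singular point. Since every indefinite generalized IA-map is a frontal (this is established in the proof of Proposition~\ref{prop:not front}), if the germ of $\psi$ at $p$ were $\mathcal{A}$-equivalent to $f_{CCR}$, then $p$ would be a non-degenerate singular point, not a branch point, at which $\psi$ is not a front. Hence Proposition~\ref{prop:not front} applies, and at $p$ one of the alternatives $f^1_u=f^2_u,\ g^1_u=g^2_u$ or $f^1_u=-f^2_u,\ g^1_u=-g^2_u$ holds. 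These are exchanged by $v\mapsto -v$, so it suffices to treat the first.

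Next I would pass to the characteristic coordinates $a=u+v$, $b=u-v$ and use the d'Alembert form of para-holomorphic functions to write $f^1=\rho_F(a)+\sigma_F(b)$, $f^2=\rho_F(a)-\sigma_F(b)$, and similarly for $g$. Setting $P=\rho_F-\rho_G$, $Q=\sigma_F-\sigma_G$, $R=\rho_F+\rho_G$, $S=\sigma_F+\sigma_G$, the first two components of $\psi$ become $P(a)+Q(b)$ and $R(a)-S(b)$, the unnormalized normal from Theorem~\ref{thm:rep} becomes $\hat\nu=(R(a)+S(b),\,-P(a)+Q(b),\,1)$, and the signed area density is proportional to $\lambda=R'(a)Q'(b)+P'(a)S'(b)$. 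In these coordinates the hypothesis at $p=(a_0,b_0)$ reads simply $Q'(b_0)=S'(b_0)=0$. A direct check then shows that the singular direction is $\partial_a$ and the null direction is $\partial_b$ at $p$, so the transversality condition (1) of Fact~\ref{fact: CCR} \emph{holds}; therefore the entire effort must go into condition (2), i.e. into proving $\Psi'(0)=0$.

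The decisive structural point is that $\hat\nu$ is affine in $a$ and in $b$ separately, so $\hat\nu_{ab}\equiv 0$, while $\hat\nu_b=(S',Q',0)$ vanishes at $p$ because $Q'(b_0)=S'(b_0)=0$. Writing the unit normal as $\nu=h\hat\nu$ with $h=|\hat\nu|^{-1}$, these two facts give $\nu_b(p)=0$ and, more importantly, $\nu_{ab}(p)=h_{ab}(p)\hat\nu(p)$, i.e. $\nu_{ab}$ is \emph{parallel to $\nu$} at $p$. Since $\nu_b(p)=0$ one has $D^f_\eta\nu=0$ at $p$, so $\Psi(0)=0$ automatically; and differentiating $\Psi(t)=\det(\tilde\gamma',D^f_\eta\nu,\nu)$ along the singular curve while using $\gamma'(0)\parallel\partial_a$, the term surviving at $t=0$ is a multiple of $\det(\psi_a,\nu_{ab},\nu)|_p$, which vanishes because $\nu_{ab}(p)\parallel\nu(p)$. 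Thus $\Psi'(0)=0$, contradicting condition (2) of Fact~\ref{fact: CCR}.

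The main obstacle is the honest computation of $\Psi'(0)$, since the differentiation takes place through a point where both $d\psi$ and the first variation $D_\eta\nu$ of the normal degenerate, and one must also absorb the effect of normalizing $\hat\nu$ to unit length. The null-coordinate setup is precisely what makes this manageable: the product structure of $\hat\nu$ collapses the computation to the single identity $\nu_{ab}(p)\parallel\nu(p)$. One delicate point to verify is independence of the choice of null vector field: a nowhere-zero rescaling only multiplies $\Psi$ by a nonvanishing function and hence preserves $\Psi'(0)=0$ once $\Psi(0)=0$; and in the degenerate configuration where the natural null field $S'\partial_a+R'\partial_b$ vanishes at $p$, one desingularizes it and uses the additional relation $\lambda_{aa}(p)=0$ (again forced by $Q'(b_0)=S'(b_0)=0$) to conclude $\Psi'(0)=0$. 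Finally I note that points $p$ that are regular, branch, or front singular points are trivially not $\mathcal{A}$-equivalent to $f_{CCR}$, which completes the proof.
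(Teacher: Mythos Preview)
Your proposal is correct and follows essentially the same route as the paper's (very terse) proof: the paper works in $(u,v)$ and simply asserts $\gamma'(0)=(1,1)$, $\eta(0)=(1,-1)$ and $\nu_{uu}(p)=\nu_{vv}(p)$, hence $\Psi'(0)=0$. Your key identity $\nu_{ab}(p)\parallel\nu(p)$ in the characteristic coordinates $a=u+v$, $b=u-v$ is exactly the paper's relation $\nu_{uu}(p)=\nu_{vv}(p)$ (since $\partial_u^2-\partial_v^2=4\,\partial_a\partial_b$), and your explicit tracking of the $\eta'$-term and the normalization factor in fact supplies justification the paper leaves implicit.
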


\begin{proof}
 From Fact~\ref{fact: CCR}, it is sufficient to check that  ${\Psi}'(0) = 0$. 
Here, we have only to consider the case of $f^1_u=f^2_u \ and\  g^1_u=g^2_u$ 
because the proof of another case is parallel to it.
Since ${\gamma}'(0)=(1,1)$, $\eta(0)=(1,-1)$ and ${\nu}_{uu}(p)={\nu}_{vv}(p)$, we have ${\Psi}'(0) = 0$. 
\end{proof}

\begin{remark}
\label{rmk: IM}
 Theorem\ref{thm: not CCR} is proved in \cite{Ishikawa-Machida} for another setting. In \cite{Ishikawa-Machida}, Ishikawa and Machida considered singularities on improper affine spheres given by projection derived from the framework of differential systems. They also have
the same results as Theorem~\ref{thm: not CCR}.
In Section~\ref{sec:comparison}, we will discuss the relationship 
between the singularities on generalized indefinite IA-maps and
those which appear in \cite{Ishikawa-Machida}.
\end{remark}

\section{Comparison with other representation formulae}
 In addition to Mart\'{i}nez type representation formulae, there are several representation formulae 
for improper affine spheres. In this section, we compare the representation formulae for 
(mainly indefinite) improper affine spheres which are obtained in the previous sections, 
to the other representation formulae for improper affine spheres which are studied in \cite{Blaschke}, 
\cite{Cortes}, \cite{Cortes-Lawn-Schafer} and \cite{Ishikawa-Machida}. The idea that relates the Mart\'{i}nez' 
representation formula to the Cort\'{e}s' representation formula is originally due to T. Kurose 
(\cite{Kurose lecture}).

  At first, the relation between Mart\'{i}nez type representation formula and 
that obtained by Cort\'{e}s-Lawn-Sch\"{a}fer is as follows.
\begin{remark}
  For 2-dimensional case, we can derive Cort\'{e}s-Lawn-Sch\"{a}fer representation formula in \cite{Cortes-Lawn-Schafer} from Theorem~\ref{thm:rep} by taking $ F=\frac{1}{2}(z-jf'(z)), G=\frac{1}{2}(z+jf'(z)) $. 
\end{remark}
 
 This fact is the analogy for the case of locally strongly convex ones as below (Remark~\ref{rmk: lsc}).
 
 The following two remarks are pointed out by T. Kurose.
  
\begin{remark}[\cite{Kurose lecture}]
 \label{rmk: lsc}
 For 2-dimensional case, we can derive Cort\'{e}s' representation formula in \cite{Cortes} from Mart\'{i}nez' representation formula in \cite{Martinez} by taking $ F=\frac{1}{2}(z-\sqrt{-1}f'(z)), G=\frac{1}{2}(z+\sqrt{-1}f'(z)) $. 
\end{remark}  

\begin{remark}
 By the identification of a para-holomorphic function to a pair of smooth functions explained in Section~\ref{sec: prel},
we can derive Blaschke's representation formula from the Mart\'{i}nez type representation formula in Theorem~\ref{thm:rep}. 

 Here, two para-holomorphic functions $F,G$ can be written as 
\[
\begin{cases}
 F(u,v)&={\rho}_1(u+v)+{\sigma}_1(u-v)+j\left\{{\rho}_1(u+v)-{\sigma}_1(u-v)\right\}, \\
 G(u,v)&={\rho}_2(u+v)+{\sigma}_2(u-v)+j\left\{{\rho}_2(u+v)-{\sigma}_2(u-v)\right\} \\
\end{cases} 
\]
by four smooth function ${\rho}_1, {\rho}_2, {\sigma}_1, {\sigma}_2$. Therefore if we take 
\[
\begin{cases}
 U_1&= {\rho}_1+{\rho}_2, \ \ V_1 = {\sigma}_1+{\sigma}_2, \\
 U_2&= -{\rho}_1+{\rho}_2, \ \ V_2={\sigma}_1-{\sigma}_2, \\
\end{cases} 
\]
then we have the Blaschke's representation formula in \cite{Blaschke}.

\end{remark}

\section{Examples}
\label{sec:ex}
 In \cite{Martinez}, Mart\'{i}nez gave several examples of locally strongly convex 
improper affine spheres by substituting holomorphic functions to his representation formula. 
Here, we construct indefinite improper affine spheres in the same manner as \cite{Martinez}.

\begin{example}
\label{ex: 23}
  Let $(F,G)=( z^2, z^3)$, then Theorem~\ref{thm:rep} gives a concrete example of indefinite generalized IA-maps, 
which is a frontal map but not a front.

For this example, we have
 \[
\begin{cases}
  f^1&=u^2+v^2, \ \ f^2=2uv, \\
  g^1&=u^3+3uv^2, \ \ g^2=3u^2v+v^3, \\ 
\end{cases} 
\]and this implies
\[
 {\lambda}=(f^1_u)^2-(f^2_u)^2-(g^1_u)^2+(g^2_u)^2 =(u^2-v^2)(4-9(u^2-v^2))
\]and
\[
\begin{cases}
 {\lambda}_u&=4u(2-9u(u^2-v^2)), \\
 {\lambda}_v&=-4v(2-9v(u^2-v^2)). \\
\end{cases} 
\]
Therefore, singular locus is $\left\{(u,v)\in{\R}^2|(u^2-v^2)(4-9(u^2-v^2))=0\right\}$. Among them, only the origin is degenerate singular point, and $\psi$ is a frontal but not a front on $\left\{ u^2-v^2=0 \right\} $ (they are all corank 1 map-germs except for on the origin.) while $\psi$ is a front on $C:=\left\{ 4-9(u^2-v^2)=0 \right\}$ (Figure 1).

By using the criteria for cuspidal edges and swallowtails, we can completely classify the singularities on
$C$. In fact, at any point on $C$ except for the point $(-\frac{2}{3},0)$, singularities are $\mathcal{A}$-equivalent
to the cuspidal edges while swallowtail appears at $(-\frac{2}{3},0)$. This assertion is verified as below.

Let $C_1:=C \cap \left\{u \geq 0 \right\}$ and $C_2:=C \cap \left\{u \leq 0 \right\}$. First, we show that
the singularity is $\mathcal{A}$-equivalent to the cuspidal edge at any point on $C_1$. 
$C_1$ is parametrized as $\gamma(t):= \frac{2}{3}(\cosh t, \sinh t)$. Since
\[
 \begin{cases}
  (f^2+g^2)_u(\gamma(t)) &={\frac{4}{3}}{\sinh t}(1+2{\cosh t}), \\
  (f^2+g^2)_v(\gamma(t)) &={\frac{4}{3}}({\cosh t}+1)(2{\cosh t}-1), \\
  \end{cases}
\]
the null vector field at $\gamma(t)$ is parallel to $(-({\cosh t}+1)(2{\cosh t}-1), {\sinh t}(1+2{\cosh t}))$.
Therefore
\[ \det(\gamma'(t), \eta(t)) =- ({\cosh t}+1)
\]
and it follows that the null vector field and the singular direction are transversal on $C_1$.
$C_2$ is parametrized as $\gamma(t):= -\frac{2}{3}(\cosh t, \sinh t)$ and
\[
  \begin{cases}
   (f^1-g^1)_u(\gamma(t)) &= -{\frac{4}{3}}({\cosh t}+1)(2{\cosh t}-1), \\
   (f^1-g^1)_v(\gamma(t)) &= -{\frac{4}{3}}{\sinh t}(1+2{\cosh t}), \\
  \end{cases}
\]
hold. Thus the null vector field at $\gamma(t)$ is parallel to $(-{\sinh t}(1+2{\cosh t}), ({\cosh t}+1)(2{\cosh t}-1))$.
Therefore
\[ \det(\gamma'(t), \eta(t)) = -{\sinh t}
\]
and it follows that the null vector field and the singular direction are transversal on $C_2$ except for
on the point $(-\frac{2}{3}, 0)$ and that $\det{\big |}_{t=0}(\gamma'(t), \eta(t)) \not= 0$ on $(-\frac{2}{3}, 0)$.
\end{example}

\begin{example} 
\label{ex: 34}
 Let $(F,G)=( z^3, z^4)$, then Theorem~\ref{thm:rep} also gives a concrete example of 
indefinite generalized IA-maps, which is a frontal map but not a front.

For this example, we have
 \[
 \begin{cases}
  f^1&=u^3+3uv^2, \ \ f^2=3u^2v+v^3, \\
  g^1&=u^4+6u^2v^2+v^4, \ \ g^2=4u^3v+4uv^3, \\ 
\end{cases} 
\] and this implies
\[
 {\lambda}=(f^1_u)^2-(f^2_u)^2-(g^1_u)^2+(g^2_u)^2 =(u^2-v^2)^2(9-16(u^2-v^2))
\]and
\[
\begin{cases}
 {\lambda}_u&=12u(u^2-v^2)(3-8(u^2-v^2)), \\
 {\lambda}_v&=-12v(u^2-v^2)(3-8v(u^2-v^2)). \\
\end{cases} 
\]
Therefore, singular locus is $\left\{(u,v)\in{\R}^2|(u^2-v^2)^2(9-16(u^2-v^2))=0 \right\}$. Among them, $\left\{u^2-v^2=0 \right\}$ is the set of degenerate singular point. Moreover, $\psi$ is a frontal but not front on $\left\{ u^2-v^2=0 \right\}$ (they are all corank 1 map-germs except for on the origin.) while $\psi$ is a front on $\widetilde{C}:=\left\{ 9-16(u^2-v^2)=0 \right\} $. Above calculation shows that (1,1) is the 
point where the following three conditions are satisfied, that is,
\begingroup
\renewcommand{\theenumi}{(\roman{enumi})}    
\renewcommand{\labelenumi}{(\roman{enumi})}  
\begin{enumerate}
 \item\label{item:1} $(1,1)$ is a degenerate singular point.
 \item\label{item:2} $\psi$ is a frontal but not a front of corank 1 on $(1,1)$.
 \item\label{item:3} the set of degenerate singular point around $(1,1)$ is locally a smooth curve.
\end{enumerate}
\endgroup
It is worth mentioning that any map-germs satisfying above conditions which appear in Ishikawa-Machida's formulation 
is not $\mathcal{A}$-equivalent to this example (Figure 2). We prove this fact in Section~\ref{sec:comparison}.

 As similar to the Example~\ref{ex: 23}, the singularities on $\widetilde{C}$ is completely classified 
by the same way. At any point on $\widetilde{C}$ except for the point $(-\frac{2}{3},0)$, singularity is 
$\mathcal{A}$-equivalent to the cuspidal edge while swallowtail appears at $(-\frac{2}{3},0)$. 
\end{example}

\begin{figure}
 \begin{center}\footnotesize
  \begin{tabular}{c@{\hspace{1cm}}c}
   \includegraphics[width=5.5cm]{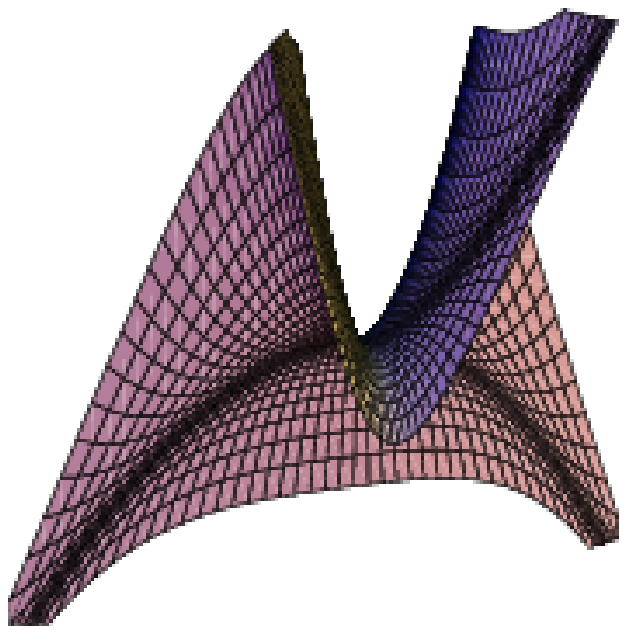}
   &
   \includegraphics[width=5.5cm]{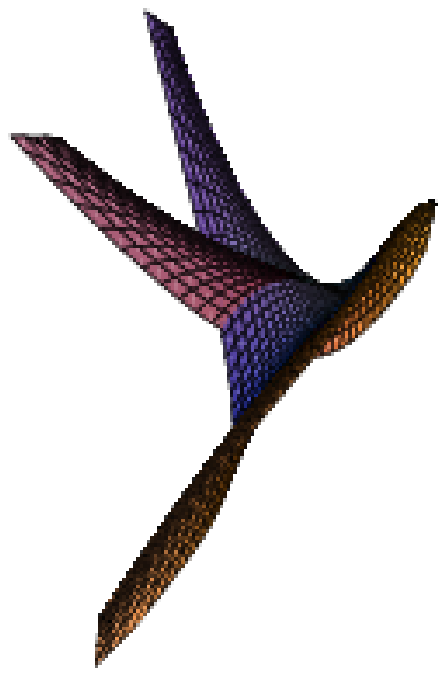}\\
     Example~\ref{ex: 23} &

    {Example~\ref{ex: 34}} \\
   &
  \end{tabular}
 \end{center}
  \caption{Example~\ref{ex: 23} and Example~\ref{ex: 34}.}
  \label{fig:lip-and-nondeg}
\end{figure}

\section{Comparison with formulation by Ishikawa-Machida}
\label{sec:comparison}
 As mentioned in Remark~\ref{rmk: IM}, Ishikawa and Machida also studied improper affine spheres with
singularities in another setting. They considered (generalized) geometric solutions of a certain Monge-Amp\`{e}re 
system $\mathcal{M}$ on ${\R}^5$ and studied the singularities of the projections of such (generalized) geometric 
solutions. Here, the projection of a (generalized) geometric solution of $\mathcal{M}$ is nothing but an improper
affine sphere outside singular points.
 The singularities of indefinite generalized IA-maps and those of the projection of 
generalized geometric solutions of $\mathcal{M}$ share some same properties as mentioned in Remark~\ref{rmk: IM}.
However, there are also different properties between singularities which appear in both formulations. 
Indeed, we can find the singularity on indefinite generalized IA-maps which does not appear 
on the generalized geometric solution of $\mathcal{M}$. 

 Before proving this, we should review the Ishikawa-Machida's formulation briefly.
 
 In \cite{Ishikawa-Machida}, they studied the singularities of graphs $z=f(x,y)$ in $xyz$-space ${\R}^3$
where $f$ is a solutions of the Monge-Amp\`{e}re type equation $f_{xx}f_{yy}-f_{xy}^2=c$. Here, 
the equation $f_{xx}f_{yy}-f_{xy}^2=c$ can be considered geometrically in terms of the differential system
$\mathcal{M}$ on $xyzpq$-space ${\R}^5$, which is generated by 
\begin{equation}
\label{eq: differential system}
  \omega= cdx{\wedge}dy-dp{\wedge}dq, \ \ \theta=dz-pdx-qdy,
\end{equation}
where $p,q$ represent $z_x=f_x, z_y=f_y$ respectively.
Here, $D=\left\{ \theta=0 \right\}$ is the contact structure on $T{\R}^5$.
For the graph $z=f(x,y)$ of a solution of the Monge-Amp\`{e}re type equation 
$f_{xx}f_{yy}-f_{xy}^2=c$ in $xyz$-space ${\R}^3$, we define its lift 
$L_f:{\R}^2 \rightarrow {\R}^5$ to ${\R}^5$ as $L_f(x,y,z):=(x,y,f(x,y),f_x(x,y),f_y(x,y))$.
Obviously, the lift of the graph of $f_{xx}f_{yy}-f_{xy}^2=c$ annihilates both $\omega$ and $\theta$,
that is, it is a Legendrian immersion into ${\R}^5$ which annihilates $\omega$.
Taking this into consideration, a \textit{geometric solution} (respectively, 
\textit{generalized geometric solution}) of $\mathcal{M}$ is defined as
a Legendrian immersion with respect to the contact structure $D$
(respectively, a map annihilating $\omega$ which is not necessarily an immersion) of ${\R}^2$ 
into ${\R}^5$, which also annihilates $\theta$.

 Among the singularities of improper affine spheres $\psi=(x,y,z):{\R}^2 \rightarrow {\R}^3$ which appear as 
the projection of (generalized) geometric solution $f=(x,y,z,p,q):{\R}^2 \rightarrow {\R}^5$ of 
the Monge-Amp\`{e}re system $\mathcal{M}$, we are especially interested in singularities of corank 1. 
In this case, by implicit function theorem, we can take a coordinate system $(u,v)$ of ${\R}^2$ around $(0.0)$ 
such that $\psi(u,v)=(u,y(u,v),z(u,v))$. Since $f^{*}\theta =0$, we have 
\begin{equation}
\label{eq:theta}
  z_u=p+qy_u \ \ \ \textrm{and} \ \ \  z_v=qy_v. 
\end{equation}
So the unit normal $\tilde{\nu}$ of $f$ becomes $\tilde{\nu}=\dfrac{1}{\sqrt{p^2+q^2+1}}(-p,-q,1)$ because $f_u=(1,y_u,z_u)$ 
and $f_v=(0,y_v,z_v)$. Hence the signed area density of $f$ is $\lambda=(1+q^2)y_v$ and we can conclude that
a point $(u_0,v_0)$ is a singular point if $y_v(u_0,v_0)=0$ and that a singular point $(u_0,v_0)$ is degenerate
(respectively, non-degenerate) if $y_{uv}(u_0,v_0)=y_{vv}(u_0,v_0)=0$ (resp. neither $y_{uv}(u_0,v_0)\not=0$ nor 
$y_{vv}(u_0,v_0)\not=0$ holds). 

Now, based on the above review, we can prove the following proposition on singularities on 
improper affine spheres.
 
\begin{proposition}
\label{prop:comparison}
 The germ of the map of Example~\ref{ex: 34} at $(1,1)\in{\R}^2$ is not $\mathcal{A}$-equivalent to 
any germ of generalized geometric solution of $\mathcal{M}$.
\end{proposition}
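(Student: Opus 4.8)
The plan is to argue by contradiction. Suppose the germ of the map $\psi$ of Example~\ref{ex: 34} at $(1,1)$ were $\A$-equivalent to the germ at some point $p_0$ of the projection of a generalized geometric solution of $\M$. Conditions \ref{item:1}--\ref{item:3} are $\A$-invariant, since they are formulated through the singular set, the corank of $d\psi$, and the front/frontal dichotomy, all preserved by the diffeomorphisms defining $\A$-equivalence; hence the $\M$-side germ satisfies them too. I would first put that germ into the corank~$1$ normal form of the review above, $\psi=(u,y(u,v),z(u,v))$ with $z_u=p+qy_u$, $z_v=qy_v$, together with the Monge-Amp\`ere relation $cy_v=p_uq_v-p_vq_u$ and the integrability relation $p_v+q_vy_u=q_uy_v$ obtained from $z_{uv}=z_{vu}$. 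Recall that here $\lambda=(1+q^2)y_v$, so the null direction is $\partial_v$ and the singular set is $\{y_v=0\}$, and that $\psi$ is a frontal which is not a front exactly where the lift $(x,y,z,p,q)$ fails to be immersive, namely where $p_v=q_v=0$.

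The key step is to extract what \ref{item:1}--\ref{item:3} force along the degenerate singular curve $\Gamma$, on which $y_v=y_{uv}=y_{vv}=0$ and $p_v=q_v=0$. Substituting $p_v=q_uy_v-q_vy_u$ (from the integrability relation) into the Monge-Amp\`ere relation gives the identity $(c+q_u^2)\,y_v=q_v\,(p_u+q_uy_u)$. Differentiating this along $\Gamma$ and using the degeneracy $y_{uv}=y_{vv}=0$ shows that $K\,dq_v=0$ on $\Gamma$, where $K:=p_u+q_uy_u$; thus either $K$ vanishes on $\Gamma$, or $q_v$ vanishes to second order along it. Carrying the expansion one further order in the direction transverse to $\Gamma$ then rigidly couples the Monge-Amp\`ere constant $c$ to the $2$- and $3$-jets of $y$ and $q$ (in the branch $K\neq0$ this takes the form of an explicit value of $K$, and in the branch $K=0$ it forces $c+q_u^2=0$). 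This is what pins down the $\A$-class of every $\M$-germ meeting \ref{item:1}--\ref{item:3}.

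Finally I would compute the matching jet data for Example~\ref{ex: 34} at $(1,1)$. There the map is of the first type in Proposition~\ref{prop:not front}, with singular direction $\gamma'(0)=(1,1)$ and null direction $\eta(0)=(1,-1)$, and one checks directly that $\psi$ vanishes to order three along the null direction; feeding these jets into the relation forced on the $\M$-side shows the required coupling to $c$ cannot hold, because the para-holomorphic construction of Theorem~\ref{thm:rep} satisfies the improper affine sphere equation but not the pointwise Monge-Amp\`ere normalization that $\M$ imposes along the whole of $\Gamma$. The main obstacle I anticipate is isolating from this computation a quantity that is genuinely $\A$-invariant, that is, independent of the coordinate choices used to reach the $\M$ normal form, and then verifying the mismatch; the rigidity that ultimately separates the two formulations is exactly that $\M$ forces the Monge-Amp\`ere equation, with its fixed constant $c$, to hold along the entire degenerate singular curve, a demand that the germ of Example~\ref{ex: 34} fails to satisfy.
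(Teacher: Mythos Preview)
Your proposal has a genuine gap: you never isolate an $\A$-invariant that actually distinguishes the two germs. The jet relations you derive on the $\M$-side --- $(c+q_u^2)y_v=q_v(p_u+q_uy_u)$, $K\,dq_v=0$ along $\Gamma$, and the further-order couplings --- are stated in terms of the auxiliary data $p,q$ and the particular corank-$1$ chart. To compare with Example~\ref{ex: 34} you would have to bring that germ into the same chart, manufacture candidate $p,q$, and rule out \emph{every} choice, all modulo source and target diffeomorphisms; you acknowledge this obstacle yourself and do not overcome it. Your concluding sentence, that the para-holomorphic construction ``satisfies the improper affine sphere equation but not the pointwise Monge-Amp\`ere normalization that $\M$ imposes along the whole of $\Gamma$'', is not a contradiction: at regular points an indefinite improper affine sphere \emph{does} satisfy a Monge-Amp\`ere equation, so the constant $c$ by itself cannot be the obstruction.

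The paper's proof takes a completely different route, and the missing ingredient is precisely an $\A$-invariant: the local algebra $Q(f)=\mathcal{E}_2/I(f)$, which by Mather's theorem (Fact cited in the paper) is invariant under $\A$-equivalence. One computes $Q(\tilde\psi)\cong\langle 1,v,v^2\rangle_{\R}$ for the germ of Example~\ref{ex: 34} at $(1,1)$. On the $\M$-side, conditions (i)--(iii) and \eqref{eq:theta} force $x=u$, $y_v=y_{uv}=y_{vv}=0$ at the point, and $z\in I(y)$; writing $y=a_kv^kq(v)+\sum_{i\neq 0}a_{ij}u^iv^j$ with $k\geq 3$ gives $Q(f)\cong\langle 1,v,\dots,v^{k-1}\rangle_{\R}$, so matching with $Q(\tilde\psi)$ forces $k=3$. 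The contradiction then comes not from the Monge-Amp\`ere constant but from condition (iii) alone: using the Mather division theorem one writes $y_v=P\,y_{vv}+R(u)$, condition (iii) forces $R\equiv 0$, and the resulting ODE $y_v=P\,y_{vv}$ has only solutions with $y_v$ of exponential type, hence nowhere vanishing --- contradicting the existence of any singular point at all. None of your jet-coupling analysis on $p,q,c$ enters.
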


The proof is based on the following well-known facts about singularities.

\begin{fact}[Mather division theorem \cite{GG}]
\label{fact:Mather}
Let $F$ be a smooth real-valued function defined on a neighborhood of $0$ in ${\R}\times {\R}$ such that
$F(0,t)=g(t)t^k$ where $g(0)\not=0$ and $g$ is smooth on some neighborhood of $0$ in ${\R}$. Then given
any smooth real-valued function $G$ defined on a neighborhood of $0$ in ${\R}\times{\R}$, there exist
smooth functions $q$ and $r$ such that
\begin{enumerate}
 \item $G=qF+r$ on a neighborhood of $0$ in ${\R}\times{\R}$, and 
 \item $r(x,t)= \displaystyle \sum_{i=0}^{k-1} r_i(x)t^i$ for $(t,x)\in{\R}\times{\R}$ near $0$.
\end{enumerate}
\end{fact}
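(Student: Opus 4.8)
The plan is to deduce this smooth (real) division theorem from the classical holomorphic Weierstrass division theorem by means of \emph{almost analytic extensions}, following the method used in \cite{GG}. First I would record the holomorphic version: if $\tilde F$ is holomorphic in $(x,t)\in\C\times\C$ near the origin with $\tilde F(0,t)=t^k\tilde g(t)$, $\tilde g(0)\neq 0$, then for every holomorphic $\tilde G$ the explicit Cauchy-type formulas
\[
 \tilde q(x,t)=\frac{1}{2\pi\sqrt{-1}}\oint_{|\zeta|=\epsilon}\frac{\tilde G(x,\zeta)}{\tilde F(x,\zeta)(\zeta-t)}\,d\zeta,
 \qquad \tilde r=\tilde G-\tilde q\,\tilde F,
\]
produce a holomorphic quotient $\tilde q$ and a remainder $\tilde r$ that is a polynomial of degree $<k$ in $t$. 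A residue computation, using that $\tilde F(x,\cdot)$ has exactly $k$ zeros inside $|\zeta|=\epsilon$ for small $x$, shows $\tilde r$ has the required form $\sum_{i=0}^{k-1}\tilde r_i(x)t^i$, and the contour integral gives uniform control of $\tilde q$ on a fixed polydisc.

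Second, I would pass from the smooth real data to complex data. Given the smooth $F$ and $G$ on a neighborhood of $0$ in $\R\times\R$, a Borel-type summation of their Taylor expansions in the imaginary directions yields smooth extensions $\tilde F,\tilde G$ on a complex neighborhood whose $\bar\partial$ derivatives vanish to infinite order along the real subspace $\R\times\R$. The construction can be arranged so that $\tilde F(0,t)=t^k\tilde g(t)$ with $\tilde g(0)\neq 0$ still holds, so the Weierstrass hypothesis survives into the complex picture.

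Third, I would substitute the almost analytic extensions into the integral formulas of the first step. Because $\tilde F$ and $\tilde G$ are only almost holomorphic, a Cauchy--Green (Stokes) estimate shows that the defect $\tilde E:=\tilde G-\tilde q\,\tilde F-\tilde r$ is built entirely from $\bar\partial\tilde F$ and $\bar\partial\tilde G$, and is therefore \emph{flat} (vanishing to infinite order) along $\R\times\R$. Restricting to the reals and setting $q:=\tilde q|_{\R\times\R}$, $r:=\tilde r|_{\R\times\R}$ gives a genuine smooth candidate decomposition $G=qF+r+E$ with $r=\sum_{i=0}^{k-1}r_i(x)t^i$ and $E$ smooth and infinitely flat along $\{x=0,\,t=0\}$.

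The hard part is the final step: absorbing the flat error $E$. Here I would prove a \emph{flat-division lemma}, namely that division by $F$ is solvable within the ideal of functions flat along the singular set, so that $E=q_0F+r_0$ with $q_0,r_0$ smooth and $r_0$ again of degree $<k$ in $t$; replacing $q$ by $q+q_0$ and $r$ by $r+r_0$ then finishes the proof. This is the analytically delicate point and the main obstacle, since it requires either a direct seminorm estimate showing the division operator is bounded (so a convergent correction series can be assembled) or the Mather argument that flatness is preserved under division. Once this lemma is in hand, the remaining smoothness and bookkeeping are routine.
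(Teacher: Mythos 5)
A preliminary remark: the paper does not prove this Fact at all --- it is imported verbatim, with citation, from Golubitsky--Guillemin \cite{GG} --- so your proposal must be judged against the standard Mather--Nirenberg proof given there. You have correctly identified the right family of techniques (holomorphic Weierstrass division plus almost analytic extensions), but two of your steps fail, and they fail exactly where the theorem is genuinely hard. First, the holomorphic step is wrong as stated: dividing by a non-polynomial $\tilde F$ through your contour formula does not produce a polynomial remainder. For $k=1$, $F(t)=te^{t}$, $G\equiv 1$, your formulas give $q(t)=(e^{-t}-1)/t$ and $r(t)=e^{t}$, which is not a constant. A remainder of degree $<k$ arises only when the divisor is a polynomial in $t$, since only then is $(P(\zeta)-P(t))/(\zeta-t)$ a polynomial of degree $k-1$ in $t$; this is precisely why \cite{GG} divides by the generic polynomial $P(t,\lambda)=t^{k}+\lambda_{k-1}t^{k-1}+\cdots+\lambda_{0}$ (or first invokes Weierstrass preparation) and only at the very end recovers division by $F$, via the implicit function theorem applied to the remainder coefficients of $F$ itself. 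That reduction is entirely absent from your plan.

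Second, and fatally, your claim that the defect $\tilde E$ is ``built from $\bar\partial\tilde F$, $\bar\partial\tilde G$ and therefore flat along $\R\times\R$'' is false. The defect is an area integral of $\bar\partial$-terms against the kernel $1/\bigl(\tilde F(x,\zeta)(\zeta-t)\bigr)$, and flatness of $\bar\partial\tilde G$ at real points does not make such an integral flat: take $\bar\partial\tilde G$ to be a bump supported near $\zeta=\sqrt{-1}/2$; the resulting integral is essentially a Cauchy kernel in $t$, nowhere flat. Worse, the kernel is singular at the complex zeros of $\tilde F(x,\cdot)$, which for $x\neq 0$ lie off the real axis --- exactly where almost analyticity gives no decay. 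This mismatch between ``flat on the reals'' and ``singular at the complex zeros of the divisor'' is the real content of the theorem. Nirenberg's resolution, reproduced in \cite{GG}, is to complexify the coefficients $\lambda$ and construct an extension of $G$ whose $\bar\partial$ vanishes to infinite order on the zero set $\{P(\zeta,\lambda)=0\}$ itself; then $q$ and $r$, defined with the area integrals \emph{included}, are smooth and the decomposition is exact, so there is no leftover error to absorb. Your final ``flat division lemma'' is therefore doubly problematic: it is invoked for an $E$ that is not in fact flat, and your own description of it is inconsistent --- if $\tilde E$ really vanished to infinite order along $\R\times\R$, then $E=\tilde E|_{\R\times\R}\equiv 0$ and there would be nothing to absorb, whereas you silently weaken this to flatness at the single point $(0,0)$. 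In short: right toolbox, but the mechanism that makes it work --- generic polynomial, extension flat on its zero set, implicit function theorem --- is missing.
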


Next, we denote by $\mathcal{E}_2$ the set of smooth functions on ${\R}^2$.
For a smooth map-germ $f=(f^1, f^2, f^3):({\R}^2,0) \rightarrow ({\R}^3,0)$,
we denote by $I(f)$ the ideal of $\mathcal{E}_2$ generated by $f^1, f^2$ and $f^3$
and define $Q(f):=\mathcal{E}_2/I(f)$.

\begin{fact}[\cite{Mather}]
 Let $f,g:({\R}^2,0) \rightarrow ({\R}^3,0)$ be two map-germs from ${\R}^2$ to ${\R}^3$.
If $f$ and $g$ are $\mathcal{A}$-equivalent each other, then $Q(f)$ and $Q(g)$ are isomorphic
as $\R$-algebras.
\end{fact}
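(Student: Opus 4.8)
The plan is to decompose an arbitrary $\mathcal{A}$-equivalence into a change of coordinates in the target followed by a change of coordinates in the source, and to check separately that each of these operations preserves the isomorphism class of the local algebra $Q(\cdot)$. Concretely, $\mathcal{A}$-equivalence of $f$ and $g$ means that there are a source diffeomorphism-germ $\phi:(\R^2,0)\to(\R^2,0)$ and a target diffeomorphism-germ $\psi:(\R^3,0)\to(\R^3,0)$ with $g=\psi\circ f\circ\phi$. Setting $\tilde f:=\psi\circ f$ we have $g=\tilde f\circ\phi$, so it suffices to prove that (i) $Q(\tilde f)\cong Q(f)$ for a pure target change, and (ii) $Q(g)\cong Q(\tilde f)$ for a pure source change.

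For the source change (ii), the reparametrization $\phi$ induces the pullback $\phi^{*}:\mathcal{E}_2\to\mathcal{E}_2$, $h\mapsto h\circ\phi$, which is an $\R$-algebra automorphism (its inverse is $(\phi^{-1})^{*}$) fixing evaluation at the origin. Since the components satisfy $g^i=\tilde f^i\circ\phi=\phi^{*}(\tilde f^i)$, the automorphism $\phi^{*}$ sends the generators of $I(\tilde f)$ to those of $I(g)$, so $\phi^{*}(I(\tilde f))=I(g)$. Hence $\phi^{*}$ descends to an $\R$-algebra isomorphism $Q(\tilde f)=\mathcal{E}_2/I(\tilde f)\cong\mathcal{E}_2/I(g)=Q(g)$. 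This step is purely formal.

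The essential point is the target change (i), and it is the only place where more than algebra is used. I claim in fact that $I(\tilde f)=I(f)$ as ideals of $\mathcal{E}_2$, so that $Q(\tilde f)=Q(f)$ on the nose. Writing $\psi=(\psi^1,\psi^2,\psi^3)$ with $\psi(0)=0$, Hadamard's lemma (Taylor's formula with integral remainder) produces smooth germs $h^i_k$ on $(\R^3,0)$ with
\[
\psi^i(y_1,y_2,y_3)=\sum_{k=1}^{3}h^i_k(y)\,y_k .
\]
Substituting $y=f$ gives $\tilde f^i=\psi^i\circ f=\sum_{k=1}^{3}(h^i_k\circ f)\,f^k\in I(f)$, so $I(\tilde f)\subseteq I(f)$; applying the same expansion to $\psi^{-1}$ together with $f=\psi^{-1}\circ\tilde f$ yields the reverse inclusion, hence equality. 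Equivalently, let $\mathcal{E}_3$ denote the smooth function-germs on $(\R^3,0)$ with maximal ideal $\mathfrak{m}_3=(y_1,y_2,y_3)$, and write $f^{*}:\mathcal{E}_3\to\mathcal{E}_2$ for the pullback; then $I(f)=f^{*}(\mathfrak{m}_3)\mathcal{E}_2$. Since $\psi$ is a diffeomorphism fixing $0$, its pullback $\psi^{*}$ is an automorphism of $\mathcal{E}_3$ preserving $\mathfrak{m}_3$, and from $\tilde f^{*}=f^{*}\circ\psi^{*}$ one gets $I(\tilde f)=\tilde f^{*}(\mathfrak{m}_3)\mathcal{E}_2=f^{*}(\psi^{*}(\mathfrak{m}_3))\mathcal{E}_2=f^{*}(\mathfrak{m}_3)\mathcal{E}_2=I(f)$.

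Combining (i) and (ii) gives $Q(f)=Q(\tilde f)\cong Q(g)$, which is the assertion. The main obstacle is precisely step (i): one needs to know that a smooth function vanishing at the origin lies in the ideal generated by the coordinate functions, which is exactly Hadamard's lemma. Everything else is the formal observation that target diffeomorphisms leave the generated ideal unchanged while source reparametrizations act through an $\R$-algebra automorphism of $\mathcal{E}_2$.
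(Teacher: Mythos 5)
Your proposal is correct, but there is nothing in the paper to compare it against: the paper states this result as a \emph{Fact} imported verbatim from Mather's paper \cite{Mather} and gives no proof of its own (it is only used as a black box in the proof of Proposition~\ref{prop:comparison}, to conclude $k=3$ from $Q(\tilde\psi)\cong Q(f)$). Your argument is a complete and self-contained substitute, and it is the standard one: split $g=\psi\circ f\circ\phi$ into a target change and a source change; observe that a source diffeomorphism acts through the $\R$-algebra automorphism $\phi^{*}$ of $\mathcal{E}_2$, carrying $I(\tilde f)$ onto $I(g)$ and hence descending to the quotients; and, for the target change, use Hadamard's lemma to write each component $\psi^i$ in the ideal $(y_1,y_2,y_3)$, which gives the equality of ideals $I(\psi\circ f)=I(f)$ rather than merely an isomorphism of quotients. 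Both steps check out, including the reverse inclusion via $\psi^{-1}$ and the reformulation $I(f)=f^{*}(\mathfrak{m}_3)\mathcal{E}_2$. It is worth noting that your proof actually yields slightly more than the stated Fact: since target diffeomorphisms do not change $I(f)$ at all, $Q(f)$ is invariant under the larger group of contact ($\mathcal{K}$-) equivalences, which is precisely the point of view of Mather's classification paper; so your blind reconstruction recovers the argument the citation points to.
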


\begin{proof}[Proof of Proposition~\ref{prop:comparison}]
 The map of Example~\ref{ex: 34} is concretely expressed as
\[
  \psi(u,v)=(u^3 + 3uv^2-u^4-6u^2v^2-v^4, v^3+3u^2v+4u^3v+4uv^3, \Phi)
\]
where 
\begin{alignat*}{1}
 {\Phi} &=
         \frac{1}{2}u^6-\frac{3}{2}u^4v^2+\frac{3}{2}u^2v^4-\frac{1}{2}v^6-\frac{1}{7}u^7-3u^5v^2-5u^3v^4-uv^6 \\
        &
        \hspace{0.3in}
        \begin{aligned}
        -\frac{1}{2}u^8+2u^6v^2-3u^4v^4 + 2u^2v^6 - \frac{1}{2}v^8.
        \end{aligned}     
\end{alignat*}

 Define $\tilde{\psi}:= (\tilde{\psi}^1, \tilde{\psi}^2, \tilde{\psi}^3):=\psi(u-1, v-1)$, then 
$Q(\tilde{\psi}):={\mathcal{E}}_2/{\langle\tilde{\psi}^1, \tilde{\psi}^2, \tilde{\psi}^3  \rangle}_{\mathcal{E}_2} 
\cong {\langle 1, v, v^2 \rangle}_{\R}$.
Recall that the map-germ $(\psi,(1,1))$ (and so $(\tilde{\psi},(0,0))$) possesses 
the following property at a singular point $p\in{\Sigma}^2$. (Example~\ref{ex: 34})

\begingroup
\renewcommand{\theenumi}{(\roman{enumi})}    
\renewcommand{\labelenumi}{(\roman{enumi})}  
\begin{enumerate}
 \item\label{item:1} $p$ is a degenerate singular point.
 \item\label{item:2} $\psi$ is frontal but not front of corank 1 on $p$.
 \item\label{item:3} the set of degenerate singular point around $p$ is locally a smooth curve.
\end{enumerate}
\endgroup

In the following, we will show that for $f=(x,y,z):{\R}^2\rightarrow {\R}^3$, the projection of 
a (generalized) geometric solution of $\mathcal{M}$ and for $(u_0,v_0) \in {\R}^2$, if the map-germ
$(f,(u_0,v_0))$ is $\mathcal{A}$-equivalent to $(\tilde{\psi},(0,0))$ then contradiction occurs.
From assumption, $(f,(u_0,v_0))$ also have the above properties (i), (ii) and (iii)
because these three conditions are preserved under the same $\mathcal{A}$-equivalent class.
By the change of coordinates of ${\R}^2$ and ${\R}^3$, we can assume that $(u_0,v_0)=(0,0)$ and 
$f(0,0)=(0,0,0)$.

First, the condition (i) and (ii) implies that $x(u,v)=u$ and $y_v(0,0)=y_{uv}(0,0)=y_{vv}(0,0)=0$.
Therefore the Taylor expansion of $y(u,v)$ around $(0,0)$ becomes as the following form:
\begin{equation}
\label{eq:y}
  y(u,v)=a_kv^kq(v)+{ \sum_{i\not=0} a_{ij}u^iv^j}
\end{equation}
where $k \geq 3$, $a_k \not=0 $ and $q(v)$ is a smooth function with $q(0)\not=0$ because otherwise, 
$y(u,v),z(u,v) \in I(u)$ from ~\eqref{eq:theta}. It follows from ~\eqref{eq:theta} and ~\eqref{eq:y} 
that $z(u,v) \in I(y(u,v))$. Hence $Q(f) \cong {\langle 1, v, \cdots, v^{k-1} \rangle}_{\R}$ and 
$Q(\tilde{\psi}) \cong Q(f)$ implies that $k=3$.

By the way, the condition (iii) implies that 
\begin{quotation}
 (iv) $\left\{y_v=0 \right\} \cap \left\{y_{vv}=0\right\}$ is locally a smooth curve.
\end{quotation}
From ~\eqref{eq:y}, Fact~\ref{fact:Mather} implies that
\[
  y_v(u,v)=P(u,v)y_{vv}(u,v)+R(u)
\]
holds.
The condition (iv) implies that $y_v(u,v)=0$ if $y_{vv}(u,v)=0$ because $y_{vv}(u,v)=0$ is locally a
smooth curve and that
there exists some $v$ for any $u$ such that $y_v(u,v)$ and $y_{vv}(u,v)$ holds around $(0,0)$
unless the singular set is $\left\{u=0 \right\}$.
Thus $R(u)=0$ holds for any $u$ around 0.
Therefore, locally 
\begin{equation}
\label{eq:MA}
  y_v(u,v)=P(u.v)y_{vv}(u,v)
\end{equation}
holds.
The general solution of ~\eqref{eq:MA} is 
\[
y(u,v)=\int{\exp{\left(\int{\frac{dv}{P(u,v)}}+\alpha(u)\right)}dv}+\beta(u),
\] 
but this contradicts to the condition (iv) because $\left\{y_v=0 \right\} = \emptyset$.
\end{proof}

\begin{acknowledgements}

\end{acknowledgements}
 The author would like to thank Kotaro Yamada for reading carefully the manuscript and giving him some suggestive
comments. He also would like to thank Hitoshi Furuhata, Jun-ichi Inoguchi, Go-o Ishikawa and Takashi Kurose for valuable
suggestions.


\begin{thebibliography}{1} 
\bibitem{Blaschke}
 W. Blaschke: {\em Vorlesungen \"{u}ber Differentialgeometrie I\hspace{-.1em}I, Affine Differentialgeometrie}.
  Springer, Berlin, (1923).  

\bibitem{Cortes}
 V. Cort\'{e}s: {\em A holomorphic representation formula for parabolic hypherspheres}.
  Banach Center Publications. {\bf57}, 11-16 (2002).

\bibitem{Cortes-Lawn-Schafer}
 V. Cort\'{e}s, M.-A. Lawn, L. Sch\"{a}fer: {\em Affine hyperspheres associated to special para-K\"{a}hler manifolds}.
  preprint

\bibitem{FSUY}
 S. Fujimori, K. Saji, M. Umehara, K. Yamada: {\em Singularities of maximal surfaces}. 
  to appear in Math. Z.

\bibitem{GG}
 M. Golubitsky, V. Guillemin: {\em Stable mappings and their singularities}.
  Springer-Verlag, New York (1973)
 
\bibitem{KRSUY}
 M. Kokubu, W. Rossman, K. Saji, M. Umehara, K. Yamada: {\em Singularities of flat fronts in hyperbolic space}.
  Pacific J. Math. {\bf221} (2005), 303-351.

\bibitem{flat fronts}
 M. Kokubu, M. Umehara, K. Yamada: {\em Flat fronts in hyperbolic 3-space}.
  Pacific J. Math. {\bf216} (2004), 149-175.

\bibitem{Inoguchi-Toda}
 J. Inoguchi, M. Toda: {\em Timelike minimal surfaces via loop groups}.
  Acta Appl. Math. {\bf83} (2004), 313-355.
  
\bibitem{Ishikawa}
 G. Ishikawa: A private communication. (2007).

\bibitem{Ishikawa-Machida}
 G. Ishikawa, Y. Machida: {\em Singularities of improper affine spheres and surfaces of constant Gaussian curvature}.
  Internat. J. Math. {\bf17}, (2007), 269-293.

\bibitem{Kurose lecture}
 T. Kurose: Lecture given at Kyushu University. (2006).

\bibitem{Kurose}
 T. Kurose: A private communication. (2007).

\bibitem{Li-Simon-Zhao}
 A.-M. Li, U. Simon, Z. Zhao: {\em Global affine differential geometry of hypersurfaces}.
  Walter de Gruyter, Berlin-New York (1993)

\bibitem{IGV}
 A. Mart\'{i}nez: {\em Relatives of flat surfaces in $H^3$}.
   Proceedings of  the International Workshop on integrable systems, geometry and visualization, 
    Kyushu Univ. Fukuoka, Japan. (2005) 115-132. 
\bibitem{Martinez}
  A. Mart\'{i}nez: {\em Improper affine maps}. 
 Math. Z. {\bf 249} (2005), 755-766.

\bibitem{Mather}
 J. Mather: {\em Stability of $C^{\infty}$ mappings. I\hspace{-.1em}V: Classification of stable germs by
  R-algebras}. Publ. Math. I.H.E.S. {\bf37} (1969), 223-248.

\bibitem{Matsuura-Urakawa}
 N. Matsuura, H. Urakawa: {\em Discrete improper affine spheres}.
  J. Geom. Phys. {\bf45} (2003), 164-183. 
 
\bibitem{Nomizu-Sasaki}
  K. Nomizu, T. Sasaki: {\em Affine differential geometry}.
   Cambridge University Press. (1994)



\end{thebibliography}
\end{document}